\newtheorem{Def}{Definition}[section]
\newtheorem{Lem}[Def]{Lemma}
\newtheorem{Pro}[Def]{Proposition}
\newtheorem{The}[Def]{Theorem}
\numberwithin{equation}{section}
\newcommand{\dist}{\operatorname{dist}}
\newcommand{\N}{\mathcal N}
\newcommand{\R}{\mathcal R}
\newcommand{\spa}{\operatorname{span}}
\renewcommand{\hat}{\widehat}
\newcommand{\Div}{\operatorname{Div}}
\renewcommand{\phi}{\varphi}
\newcommand{\diag}{\operatorname{diag}}
\newcommand{\B}{\mathcal B}
\newcommand{\T}{\textsf T}
\newcommand{\ie}{i.e.\ }
\newcommand{\eg}{e.g.\ }
\newcommand{\Matrix}[1]{\left[\begin{matrix}#1 \end{matrix} \right]}
\newcommand{\scalar}[2]{\left(#1\middle| #2 \right)}
\newcommand{\ee}{\mathsf{e}}
\newcommand{\DD}{\mathds D}
\newcommand{\EE}{\mathds E}
\newcommand{\e}{\varepsilon}
\newcommand{\bnu}{\boldsymbol{\nu}}
\newcommand{\V}{\mathds V}
\def\typeout#1{\message{^^J}\message{#1}\message{^^J}}
\newif\ifSRCOK \SRCOKtrue
\def\EJECT{\SRC\eject}
\def\WinEdt#1{\typeout{:#1}}% WinEdt LOG MODE and INPUT
\gdef\MainFile{\jobname.tex}% ".tex" needed for MiKTeX
\gdef\CurrentInput{\MainFile}
\def\SRC{\ifSRCOK%
  \ifnum\inputlineno>\LASTLINE%
    \ifnum\LASTLINE<0%
      \global\PAGETOP=\inputlineno%
    \fi%
    \global\LASTLINE=\inputlineno%
    \ifnum\INPSP=0%
      \ifnum\inputlineno>\PAGETOP%
        
      \fi%
    \else%
      
    \fi%
  \fi%
\fi}
\def\PUSH#1{%
\SRC%
\ifnum\INPSP=0 \global\let\INPSTACKA=\CurrentInput \else%
\ifnum\INPSP=1 \global\let\INPSTACKB=\CurrentInput \else%
\ifnum\INPSP=2 \global\let\INPSTACKC=\CurrentInput \else%
\ifnum\INPSP=3 \global\let\INPSTACKD=\CurrentInput \else%
\ifnum\INPSP=4 \global\let\INPSTACKE=\CurrentInput \else%
\ifnum\INPSP=5 \global\let\INPSTACKF=\CurrentInput \else%
               \global\let\INPSTACKX=\CurrentInput \fi\fi\fi\fi\fi\fi%
\gdef\CurrentInput{#1}%
\WinEdt{<+ \CurrentInput}%
\global\LASTLINE=0%
\ifSRCOK\fi%
\global\advance\INPSP by 1}
\def\POP{%
\ifnum\INPSP>0 \global\advance\INPSP by -1  \fi%
\ifnum\INPSP=0 \global\let\CurrentInput=\INPSTACKA \else%
\ifnum\INPSP=1 \global\let\CurrentInput=\INPSTACKB \else%
\ifnum\INPSP=2 \global\let\CurrentInput=\INPSTACKC \else%
\ifnum\INPSP=3 \global\let\CurrentInput=\INPSTACKD \else%
\ifnum\INPSP=4 \global\let\CurrentInput=\INPSTACKE \else%
\ifnum\INPSP=5 \global\let\CurrentInput=\INPSTACKF \else%
               \global\let\CurrentInput=\INPSTACKX \fi\fi\fi\fi\fi\fi%
\WinEdt{<-}%
\global\LASTLINE=\inputlineno%
\global\advance\LASTLINE by -1%
\SRC}
\def\INPUT#1{\relax}
\def%Specify File Extension!
\let\originalxxxeverypar\everypar
\newtoks\everypar
\everymath\expandafter{\the\everymath\expandafter\SRC}
\output\expandafter{\expandafter\SRCOKfalse\the\output}
\newif\ifSRCOK \SRCOKtrue
\gdef\MainFile{\jobname.tex}% ".tex" needed for MiKTeX
\gdef\CurrentInput{\MainFile}
\def\EJECT{\SRC\eject}
\def\WinEdt#1{\typeout{:#1}}% WinEdt LOG MODE and INPUT
\def\SRC{\ifSRCOK%
  \ifnum\inputlineno>\LASTLINE%
    \ifnum\LASTLINE<0%
      \global\PAGETOP=\inputlineno%
    \fi%
    \global\LASTLINE=\inputlineno%
    \ifnum\INPSP=0%
      \ifnum\inputlineno>\PAGETOP%
      \fi%
    \else%
    \fi%
  \fi%
\fi}
\def\PUSH#1{%
\SRC%
\ifnum\INPSP=0 \global\let\INPSTACKA=\CurrentInput \else%
\ifnum\INPSP=1 \global\let\INPSTACKB=\CurrentInput \else%
\ifnum\INPSP=2 \global\let\INPSTACKC=\CurrentInput \else%
\ifnum\INPSP=3 \global\let\INPSTACKD=\CurrentInput \else%
\ifnum\INPSP=4 \global\let\INPSTACKE=\CurrentInput \else%
\ifnum\INPSP=5 \global\let\INPSTACKF=\CurrentInput \else%
               \global\let\INPSTACKX=\CurrentInput \fi\fi\fi\fi\fi\fi%
\gdef\CurrentInput{#1}%
\WinEdt{<+ \CurrentInput}%
\global\LASTLINE=0%
\ifSRCOK\fi%
\global\advance\INPSP by 1}
\def\POP{%
\ifnum\INPSP>0 \global\advance\INPSP by -1  \fi%
\ifnum\INPSP=0 \global\let\CurrentInput=\INPSTACKA \else%
\ifnum\INPSP=1 \global\let\CurrentInput=\INPSTACKB \else%
\ifnum\INPSP=2 \global\let\CurrentInput=\INPSTACKC \else%
\ifnum\INPSP=3 \global\let\CurrentInput=\INPSTACKD \else%

\ifnum\INPSP=4 \global\let\CurrentInput=\INPSTACKE \else%
\ifnum\INPSP=5 \global\let\CurrentInput=\INPSTACKF \else%
               \global\let\CurrentInput=\INPSTACKX \fi\fi\fi\fi\fi\fi%
\WinEdt{<-}%
\global\LASTLINE=\inputlineno%
\global\advance\LASTLINE by -1%
\SRC}
\def\INPUT#1{\relax}
\let\OldINCLUDE=\include
\def\include#1{%Always ".tex" file type!
\EJECT%
\PUSH{#1.tex}%
\OldINCLUDE{#1}%
\POP}
\def%Specify File Extension!
\let\originalxxxeverypar\everypar
\newtoks\everypar
\everymath\expandafter{\the\everymath\expandafter\SRC}
\let\zzzxxxbibliography=\bibliography
\def\bibliography#1{\PUSH{\jobname.bbl}\zzzxxxbibliography{#1}\POP}
\output\expandafter{\expandafter\SRCOKfalse\the\output}
\begin{document}

\author{Martin Herberg}
\author{Martin Meyries}
\author{Jan Pr\"uss} %\thanks für Fußnoten
\author{Mathias Wilke}
\email{martin.herberg@mathematik.uni-halle.de}
\email{martin.meyries@mathematik.uni-halle.de}
\email{jan.pruess@mathematik.uni-halle.de}
\email{mathias.wilke@mathematik.uni-halle.de}

\address{\!\!\!Martin-Luther-University Halle-Wittenberg, Institute of Mathematics, 06099 Halle (Saale), Germany}

%%%%%%%%%%%%%%%%%%%%%%%%%%%%%%%%%%%%%%%%%%%%%%%%%%%%%%%%%%%%%%%%%%%%%%%%%%%%%%%%%%%%%%%%%%%%%%%%%%%%%%%%%%%%%%%%%%%%%%%%%%%%%%%%%%%%%%%%%%%%%%%%

\begin{abstract}
The mass-based Maxwell-Stefan approach to one-phase multicomponent reactive mixtures is mathematically analyzed. It is shown that the resulting quasilinear, strongly coupled reaction-diffusion system is locally well-posed in an $L_p$-setting and generates a local semiflow on its natural state space. Solutions regularize instantly and become strictly positive if their initial components are all nonnegative and nontrivial. For a class of reversible mass-action kinetics, the positive equilibria are identified: these are precisely the constant chemical equilibria of the system, which may form a manifold. Here the total free energy of the system is employed which serves as a Lyapunov function for the system. By the generalized principle of linearized stability, positive  equilibria are proved to be normally stable. 
\end{abstract}

\title[Maxwell-Stefan Diffusion]{Reaction-diffusion systems of Maxwell-Stefan type with reversible mass-action kinetics}

\keywords{Reaction-diffusion systems, multicomponent reactive mixtures, Maxwell-Stefan diffusion, reversible mass-action kinetics, maximal $L_p$-regularity, generalized principle of linearized stability, free energy, convergence to equilibria.}
\subjclass[2000]{35R35, Secondary: 35Q30, 76D45, 76T10.}

\maketitle

\section{Introduction}
\subsection{Reaction-diffusion systems of Maxwell-Stefan type} The Maxwell-Stefan approach modeling diffusion in multicomponent mixtures is well-known in the engineering literature, cf. \cite{Gio, KT86, KW97, RTao, WK00}. 
In the mathematical community the resulting reaction-diffusion equations seem much less known, but have recently attracted a lot of attention, see \cite{Bothe, BGS12, JS13}.  Therefore, we begin with a review of the basic ideas of this approach. 

Let $\Omega\subset \mathbb R^n$ be an open bounded domain with boundary $\partial\Omega$ of class $C^{2+\alpha}$ and outer normal field $\nu$. We consider a mixture of $N\geq 2$ species $A_k$  with \emph{molar masses} $M_k>0$ and \emph{individual mass densities} $\rho_k\geq0$ filling the container $\Omega$. Mass balance of the single component $A_k$ reads
$$ \partial_t\rho_k + {\rm div}_x(\rho_k \mathbf {u}_k) = M_k r_k\qquad \mbox{ in  } \Omega,\quad t>0,$$
where $\mathbf {u}_k$ denotes the \emph{individual velocity} of species $A_k$, satisfying $(\mathbf {u}_k|\nu)=0$ on $\partial\Omega$, and $r_k$ is the rate of production of species $A_k$ due to chemical reactions. Observe that the kinetics $r$ should be {\em positivity preserving}, i.e., subject to the condition
$$ \rho_j\geq0,\quad \rho_k=0 \quad \Rightarrow \quad r_k\geq0,$$
and should satisfy $\sum_k M_kr_k =0$, which results in \emph{conservation of total mass}. The quantities of interest are the mass densities $\rho_k$, while the individual velocities $\mathbf {u}_k$ are in general unknown and have to be modeled, as well as the kinetics $r_k$. To reduce the complexity of these balance laws, we introduce the \emph{total density} $\rho=\sum_k \rho_k$, the {\em barycentric velocity} $\mathbf {u}=\sum_k\rho_k\mathbf {u}_k/\rho$, the \emph{mass fractions} $y_k=\rho_k/\rho$, and the \emph{concentrations} $c_k=\rho_k/M_k= y_k \rho/M_k$. With these new variables, we obtain the overall mass  balance
$$
 \partial_t \rho + {\rm div}_x(\rho \mathbf {u})=0\qquad  \mbox{ in } \Omega,\quad t>0,
$$
and $(\mathbf {u}| \nu)=0$ on $\partial\Omega$.
The individual mass balances now become
$$ \rho(\partial_t y_k +\mathbf {u}\cdot \nabla_x y_k) + {\rm div}_x J_k = M_k r_k\qquad \mbox{ in } \Omega,\quad t >0,$$
where the \emph{diffusive fluxes} $J_k$ are given by
$$ J_k = \rho_k(\mathbf {u}_k-\mathbf {u}),\qquad k=1,\ldots,N.$$
Note that, by definition, $\sum_k y_k=1$ and $\sum_k J_k=0$.

So far everything is physically exact in the framework of continuum mechanics.
However, to obtain a closed model one has to prescribe laws for $\mathbf {u}$, $r_k$, and most importantly for the diffusive fluxes $J_k$.
In this paper, we are interested in the {\em incompressible, isobaric, isothermal case}, which means  
$$\rho = const, \qquad \mathbf u = 0,$$
and no temperature dependence. We note that most of the engineering literature, as well as the papers \cite{Bothe, JS13}, is \emph{molar-based}, i.e., instead of the total mass $\rho$ the \emph{total molar concentration} $c_{\text{tot}} = \sum_k c_k$ is assumed be constant and the \emph{molar averaged velocity} $\mathbf v = \frac{1}{c_{\text{tot}}}\sum_k c_k \mathbf u_k$ vanishes. However, adding up the individual mass balances, this leads to $\sum_k r_k = 0$, which is only satisfied in special situations. Hence, also having in mind the more general case of nontrivial velocity field and temperature, we prefer the  \emph{mass-based ansatz} $\rho = const$.

The above assumptions lead to the problem
\begin{equation}\label{MSDR}
 \rho\partial_ty_k +{\rm div}_x J_k =M_kr_k(y)\quad \mbox{ in } \Omega,\qquad (J_k|\nu)=0\quad \mbox{ on } \partial\Omega,
\end{equation}
for $k=1,\dots,N$, completed by initial data $y_k(0) = y_0^k \geq 0$.
We again emphasize  the constraints
\begin{equation}\label{consmass}
\sum_{k=1}^N J_k=0,\qquad \sum_{k=1}^N y_k=1.
\end{equation}
Together  with $y\geq0$ this already implies $L_\infty$-bounds for $y$, a very important property. Therefore, when modeling the diffusive fluxes it is essential that positivity
as well as conservation of mass are ensured.

A classical approach to model the diffusive fluxes $J_k$ is now as follows. One of the species, say $A_N$, acts as a solvent for the mixture, say water, or tuluol, benzol, etc. This means that $y_N$ is close to 1 and
the remaining $y_k$ are small, hence the $A_k$ are dilute for $k\neq N$. As $A_N$ will in general not be involved  in the reactions, the equation for $y_N$ is ignored and the remaining diffusive fluxes are modeled by \emph{Fick's law}, i.e.\
$J_k =-d_k\nabla_xy_k$, $k=1,\ldots,N-1$, where $d_k>0$. This way \eqref{MSDR} becomes a semilinear reaction-diffusion system with diagonal main part which preserves nonnegativity of $y_1,\ldots,y_{N-1}$.
However, now \eqref{consmass} forces an unrealistic diffusive flux $J_N$ for $y_N$, such that nonnegativity and \emph{a priori} $L_\infty$-bounds for the mass fractions might get lost. This might be one reason for the notorious problem of global existence in the Fickian approach, see \cite{Pierre, Rolland}. Another drawback of this approach is that cross-diffusion effects like uphill diffusion or osmotic diffusion cannot be modeled, but are well-known to appear in nature, cf. \cite{Duncan}.

An alternative way to model the diffusive fluxes is the \emph{Maxwell-Stefan approach}, which goes back to the old but famous papers \cite{Maxwell, Stefan}.
In this approach, a balance of so-called {\em driving forces} ${\sf d}_k$ and {\em friction forces} ${\sf f}_k$ is postulated, i.e., ${\sf d}_k={\sf f}_k$.
The friction forces are modeled by 
\begin{equation}\label{friction-forces}
{\sf f}_k = \rho\sum_{j\neq k} f_{kj}y_ky_j(\mathbf u_j-\mathbf u_k) = \sum_{j\neq k} f_{kj}(y_kJ_j-y_jJ_k),
\end{equation}
see \cite[Formula (16)]{KW97}, with the symmetric \emph{friction coefficients} $f_{kj}=f_{jk}>0$.  These coefficients may depend on the composition $y$, but in the sequel we assume them to be constant. Observe that $\sum_k \textsf{f}_k = 0$, so that the friction forces act only on the components but not on the mixture.
The driving forces ${\sf d}_k$ have to be modeled as well and are typically given by the \emph{chemical potentials} $\mu_k$. 

In the mentioned literature, where the total molar concentration $c_{\text{tot}}$ is constant instead of the total mass as in the present paper, it is assumed that ${\sf d}_k = c_k \nabla_x \mu_k$, and one usually considers the standard chemical potentials $\mu_k = \log(\gamma_k c_k)$ for the potentials. Here $R$ is the universal gas constant, $\theta$ is the (constant) temperature and $\gamma_k > 0$ are so-called activity coefficients. This results in ${\sf d}_k =  \nabla_x c_k$. In particular, the necessary relation $\sum_k {\sf d}_k = 0$ is satisfied due to $\sum_k c_k  =const$.  

However, in the mass-based approach and for general chemical potentials $\mu_k = \partial_{y_k} \psi$, 
where $\psi$ is the density of the constitutive \emph{Helmholtz free energy}, $\sum_k {\sf d}_k = 0$ is in general not satisfied anymore and thus the ansatz for $\textsf{d}_k$ must be modified. Based on entropy considerations, it will be demonstrated in \cite{BotheDreyer} that
\begin{equation}\label{driving-forces}
{\sf d}_k =  y_k( \nabla_x\mu_k-\sum_{j=1}^N y_j\nabla_x\mu_j)
\end{equation}
is in fact a natural modification of the above driving force model which guarantees $\sum_k {\sf d}_k = 0$ for arbitrary free energies $\psi$. In the sequel we assume \eqref{driving-forces} and 
\begin{equation}\label{eq:889}
 \rho \psi =  \sum_{k} c_k[ \log(c_k/\mathbf{c}_*^k) - 1] =  \sum_{k} \frac{\rho y_k}{M_k} [ \log(y_k/\mathbf{y}_*^k) - 1],
\end{equation}
which results in $\mu_k = \frac{1}{M_k} \log(y_k/\mathbf{y}_k^*)$ for the potentials. 
 Here $\mathbf{c}_*^k = \frac{\rho}{M_k} \mathbf{y}_*^k$ are the components of a constant \emph{chemical equilibrium} $\mathbf{c}_*$ (see below and Section \ref{sec:stability}).

The assumptions ${\sf d}_k={\sf f}_k$ lead to the \emph{Maxwell-Stefan equations} for the columns $J^\alpha \in \mathbb R^N$ of the flux matrix $J = (J_1,\ldots, J_N)^{\textsf{T}} \in \mathbb R^{N\times n}$. Writing
$$M={\rm diag}(M_j),\qquad \textsf{e}=[1,\ldots,1]^{\sf T},\qquad  P(y)=I- y\otimes \textsf{e} = I-(\cdot|\textsf{e})y,$$
these equations read as follows:
\begin{equation}\label{MSLaw}\left \{
\begin{array}{l}
B(y) J^\alpha = P(y)M^{-1}\partial_{x_\alpha}y,\quad \alpha=1,\ldots,n,\\\\
B(y)=[b_{ij}(y)], \quad b_{ij}(y)= f_{ij}y_i\; \mbox{ for } i\neq j,\quad b_{ii}(y)=-\sum_{l=1}^N f_{il}y_l, \quad i,j=1,\ldots,N.
\end{array}\right.
\end{equation}
Recall that $f_{ij} = f_{ji} > 0$ are constants, and we set $f_{ii} = 0$. As a consequence of the mass-based approach explained above, \eqref{MSLaw} differs in particular from the equations considered in \cite{Bothe, JS13} by the projection $P(y)$ onto 
$$\mathds E = \{\textsf{e}\}^{\perp}.$$
At this point one essentially has to solve in \eqref{MSLaw} for the $J^\alpha$ and insert the result into \eqref{MSDR}, leading to a system of reaction-diffusion equations for the mass fractions $y_k$ satisfying $\sum_k y_k = 1$.

\subsection{Main results} We now describe the results of the present paper concerning the Maxwell-Stefan equations \eqref{MSLaw} as well as  solvability, positivity and stability of equilibria for \eqref{MSDR}.

It was demonstrated in \cite{Bothe}, employing the Perron-Frobenius theory for quasi-positive matrices, that $B(y)$ is invertible on $\mathds E$ for all $y$ from 
$$\mathring \DD =\{y\in (0,1)^N: (y|\textsf{e}) = 1\}.$$
In Section 2 we extend the analysis of $B(y)$ and show that it is in fact invertible on a (relatively) open neighbourhood $\mathds V\subset \mathds E + \frac{1}{N} \textsf{e}$ of
$$\DD =\{y\in [0,1]^N: (y|\textsf{e}) = 1\},$$
i.e., we may allow for $y$ with vanishing and even negative components. Observing that $(J_k|\nu)=0$ is equivalent to $\partial_\nu y=0$ on $\partial\Omega$, from \eqref{MSDR} we arrive at the following quasilinear, strongly coupled parabolic system
\begin{equation}
\label{MS}\tag{MS} \left \{
\begin{split}
\rho\partial_t y + {\rm Div}_x ( A(y)P(y)M^{-1}[\nabla_x y]^{\sf T})&= Mr(y) &&\mbox{ in } \Omega,\;\;t>0,\\
\partial_\nu y&=0 &&\mbox{ on } \partial\Omega,\;\;t>0,\\
y(0)&=y_0&&\mbox{ in } \Omega,
\end{split}\right.
\end{equation}
where $A(y) = (B(y)|_{\mathds E})^{-1}$. Here we write $\nabla_x y = [\partial_\alpha y_j] \in \mathbb R^{n\times N}$ and $\text{Div}_x$ means to take the divergence in each row of the $N\times n$-matrix $A(y)P(y)M^{-1}[\nabla_x y]^{\sf T}$.

It turns out that for all $y\in \mathds V$ the negative flux matrix $-A(y)P(y)M^{-1}$ is \emph{normally elliptic} in $\mathds E$, i.e., its spectrum satisfies $\sigma(-A(y)P(y)M^{-1}|_{\mathds E}) \subset \{\text{Re}\,z > 0\}$. As  a consequence, the linearization of \eqref{MS} enjoys the property of \emph{maximal $L_p$-regularity}, $1<p<\infty$. This allows to prove \emph{local-in-time existence and uniqueness of classical solutions} for \eqref{MS} for sufficiently smooth initial data $y_0$ with values in $\mathds V$ (Theorems \ref{Sat:Haupt1} and \ref{thm:classical}). Here we may allow for general mass preserving kinetics $r$. The result is based on the general theory from \cite{KohPruWil, Bari} for quasilinear parabolic problems, which we summarize in the appendix for the reader's convenience.

For initial data with values in $\mathring \DD$, local well-posedness by means of maximal $L_p$-regularity was already indicated in \cite{Bothe}. Our extension to a neighbourhood $\mathds V$ has the following advantages. First, we may allow for initial data with components vanishing on parts or even on all of $\Omega$, which is desirable from an applications point of view. Secondly, it allows to show that  solutions corresponding to initial data with  nontrivial components become \emph{instantaneously strictly positive} (Theorem \ref{positivity}). This result will be achieved as follows. In Lemma \ref{Lem:Inverse} we demonstrate that the inverse $A(y) = [a_{ij}(y)]=  (B(y)|_{\mathds E})^{-1}$ can be represented by coefficients
$$ a_{ij}(y) = y_i a_{i}^1(y),\quad  j\neq i,\qquad a_{ii}=-a_i^0(y),$$
where $a_i^0$ and $a_{ij}^1$ are real analytic and $a_i^0(y)>0$ for $y_i = 0$. As a consequence, it turns out that a component $y_i$ is a supersolution  of a linear parabolic equation (see Theorem \ref{eq:669}), provided $y_i$ is close to zero. Since we know that $y_i$ is smooth including the points where it (hypothetically) vanishes, we may apply the strong maximum principle and Hopf's lemma to deduce strict positivity. Thie result is valid for  mass and positivity preserving kinetics $r$.

In our investigations of stability of equilibria in Section 5 we specialize to {\em mass-action kinetics}, modeling $m\in \mathbb N$ single reversible  reactions of the species $A_j$,
\begin{equation}\label{eq:elem-reac}
\sum_{j=1}^N \bnu_{jl}^+ A_j\; \overset{k_+^l}{\underset{k_-^l}{\leftrightharpoons}} \; \sum_{j=1}^N \bnu_{jl}^- A_j, \qquad l=1,...,m.
\end{equation}
Here $\bnu_{jl}^+, \bnu_{jl}^- \in \mathbb N_0$ are the \emph{stoechiometric coefficients} and $k_+^l, k_-^l > 0$ are the \emph{reaction rates}.  The precise form of the corresponding kinetics $r(y)$ is described in Section \ref{sec:stability}, see also (\cite[Section 6.4]{Gio}). Such mass-action kinetics are always positivity preserving, and we assume them to be mass conserving. More importantly, we assume that at least one \emph{chemical equilibrium} $\mathbf c_*$ exists, i.e., each single reaction in \eqref{eq:elem-reac} is at equilibrium in $\mathbf c_*$. Then any kinetic equilibrium with stricitly positive components is a chemical one. Further, the set  of all strictly positive equilibria of \eqref{MS} forms a smooth manifold whose dimension equals $N-s-1$, where $s< N$ is the rank of the \emph{stoechiometric matrix} $\bnu = [\bnu_{jl}^+ - \bnu_{jl}^-] \in \mathbb Z^{N\times m}$.

Under the above assumptions, we are going to show that any positive equilibrium is a homogeneous kinetic one (Proposition \ref{Lem:Konstanten}), and that each of these are stable, as $t\to \infty$, with respect to the semiflow generated by \eqref{MS} (Theorem \ref{Sat:Haupt2}). Further, each solution starting sufficiently close to the set of equilibria is global-in-time and converges exponentially fast to a single equilibrium. This generalizes \cite[Theorem 9.7.4]{Gio} to the case of familiy a equilbria, i.e., when the rank of $\bnu$ is less than $N-1$.

The key to these results is the \emph{total free energy}
$$
\Psi(y) = \int_\Omega \psi(y)\,dx,
$$
with density $\psi$ defined in \eqref{eq:889}, which serves as a Lyapunov function for \eqref{MS}. In fact, along smooth positive solutions $y$ we have
$$\rho\partial_t\psi(y)+{\rm div}_x \big(\sum_k \mu_k J_k\big) = \sum_k (\nabla_x\mu_k|J_k) + \sum_k \mu_kM_kr_k,$$
and it will be shown in Section \ref{sec:stability} that
$$\sum_k (\nabla_x\mu_k|J_k)\leq0, \quad  \sum_k \mu_k M_kr_k\leq 0.$$
Moreover, if both these quantities vanish, then $J =0$ and we are in a spatially homogeneous chemical equilibrium. This remarkable property of the Maxwell-Stefan model characterizes the equilibria of \eqref{MS}. Our stability proof is based on the \emph{generalized principle of linearized stability} for manifolds of equilibria in quasilinear problems \cite{PruSimZac}.  Here the essential point is to determine the kernel of the linearization and to show that zero is a semi-simple eigenvalue.

We finally give a conditional result on the convergence to equilibria of globally bounded solutions which stay away from the boundary $\DD\setminus \mathring\DD$ (see Proposition \ref{prop:global}) as $t\to \infty$. This will be a consequence of the relative compactness of bounded orbits and the Lyapunov property of the free energy. Compactness follows from the method of time weights \cite{KohPruWil, PruSim}. Already in the ODE case, the analysis of solutions that converge to $\DD\setminus \mathring\DD$ is rather difficult, see \cite{HJ72}.

We expect that our approach can be extended to the case of variable total density $\rho$, when combined with a Navier-Stokes equation for the barycentric velocity $\mathbf u$. This topic will be addressed in another paper.

Let us mention other analytical results on reaction-diffusion systems based on the Maxwell-Stefan approach. In the mathematically pioneering article \cite{Bothe}, the author already sketches some ideas which are used in our paper. These include normal ellipticity of the linearization as well as an argument to prove nonnegativity. In \cite[Theorem 9.7.4]{Gio}, for reversible mass-action kinetics as above global existence of classical solutions and their convergence as $t\to \infty$ is shown in a neighbourhood of an isolated positive equilibrium. In fact, in \cite{Gio} the case $\Omega = \mathbb R^n$ and a constant, nontrivial velocity field $\mathbf u$ is considered. The arguments are based on energy methods. In \cite{BGS12} the case $N=3$ is investigated in a special situation involving equality of some friction coefficients. \emph{Global existence} of nonnegative weak solutions for general positive
initial data is proven in \cite{JS13} by considering \eqref{MS} in entropy variables. The result is proved under the \emph{a priori} assumption that the solution of \eqref{MS} is strictly positive for all times. Uniqueness of such solutions is not known. For vanishing kinetics $r=0$ it is further shown that the constructed solution converges to the mean value of the initial data. In the compressible case, global weak solutions are constructed in \cite{MPZ12} for a two-component mixture. \smallskip

This paper is organized as follows. In Section \ref{sec:MS} we study the Maxwell-Stefan equations \eqref{MSLaw} in detail. In Section \ref{sec:wrp} we prove local-in-time well-posedness, regularity and instantaneous positivity for (MS). Section \ref{sec:stability} contains the stability analysis of equilibria. In the appendix we summarize the abstract results for general quasilinear parabolic problems that are used in the paper.\smallskip

\textbf{Notations.} The space of linear operators between Banach spaces $X_1,X_0$ is denoted by $\mathcal B(X_1, X_0)$, and $\mathcal B(X_0) = \mathcal B(X_0,X_0)$. Kernel, range and spectrum of an operator $A$ are denoted by $\N(A)$, $\R(A)$ and $\sigma(A)$, respectively. For a vector $y\in \mathbb R^N$ we write $y\geq 0$ resp. $y > 0$ if $y_k\geq 0$ resp. $y_k > 0$ for each $k=1,\ldots,N$. We further write $Y = \diag (y_k)\in \mathbb R^{N\times N}$ for $y\in \mathbb R^N$. Throughout we will consider the following subsets of $\mathbb R^N$, where $\textsf{e} = (1,...,1)^{\textsf{T}}\in \mathbb R^N$,
$$\EE = \{\textsf{e}\}^\perp, \qquad  \mathring{\DD} = \{y\in (0,1)^N\,:\, (\textsf{e}|y) = 1\}, \qquad \DD = \{y\in [0,1]^N\,:\, (\textsf{e}|y) = 1\}.$$

%%%%%%%%%%%%%%%%%%%%%%%%%%%%%%%%%%%%%%%%%%%%%%%%%%%%%%%%%%%%%%%%%%%%%%%%%%%%%%%%%%%%%%%%%%%%%%%%%%%%%%%%%%%%%%%%%%%%%%%%%%%%%%%%%%%%%%%%%%%%%%%%
\section{Inversion of the Maxwell-Stefan relations}\label{sec:MS}

In this section we investigate the Maxwell-Stefan relations \eqref{MSLaw} in more detail. We show that the restriction of the matrix $B(y)$ to $\EE$ is invertible for all $y$ in an open neighbourhood $U\subset \mathbb R^N$ of $\DD$, and investigate the structure of its inverse
$$A(y) = (B(y)|_{\EE})^{-1}.$$
We further show that the spectrum of the negative flux matrix in \eqref{MS},
$$A_0(y) = -A(y) P(y) M^{-1},$$
considered as an element of $\mathcal B(\EE)$, belongs to $(0,\infty)$ for all $y \in U$.

The following properties of $B(y)$ were obtained in \cite[Section 5]{Bothe}, as a consequence of the Perron-Frobenius theorem for irreducible, quasi-positive matrices. See also \cite[Lemma 3]{JS13}.

\begin{Lem}[]\label{Lem:B}
For any $y\in \mathbb R^N$ we have $y\in \N(B(y))$ and $\R(B(y)) \subseteq \EE$.
Moreover, for $ y\in \mathring{\DD}$ it holds that
$$\sigma(B(y)) \subseteq (-\infty,0], \qquad N(B(y)) = \spa \{ y\},\qquad R(B(y)) = \{\ee\}^\bot=\EE.$$
\end{Lem}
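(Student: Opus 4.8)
The plan is to treat the two assertions separately, since the first holds for arbitrary $y$ while the second exploits the positivity of the components on $\mathring{\DD}$.

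First I would verify the two algebraic identities by direct computation from the definition of $B(y)$ in \eqref{MSLaw}, recalling $f_{ii}=0$ and $f_{ij}=f_{ji}$. For the $i$-th component of $B(y)y$,
\[
(B(y)y)_i = -\Big(\sum_{l\neq i} f_{il}y_l\Big)y_i + \sum_{j\neq i} f_{ij}y_iy_j = 0,
\]
so $y\in\N(B(y))$. For the range I would compute the $j$-th column sum and use the symmetry of the friction coefficients,
\[
\sum_{i=1}^N b_{ij}(y) = -\sum_{l\neq j}f_{jl}y_l + \sum_{i\neq j} f_{ij}y_i = 0,
\]
that is $\ee^{\T}B(y)=0$, which is precisely $\R(B(y))\subseteq\EE$. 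Neither identity requires a sign condition on $y$.

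For $y\in\mathring{\DD}$ all $y_i>0$, so $Y=\diag(y_k)$ is invertible, and the key device is that $S:=Y^{-1}B(y)$ is \emph{symmetric}: its off-diagonal entries equal $b_{ij}(y)/y_i=f_{ij}=f_{ji}$, while the diagonal part is harmless. Consequently $B(y)=Y^{1/2}\hat S\,Y^{-1/2}$ is similar to the symmetric matrix $\hat S:=Y^{1/2}SY^{1/2}$, whence $\sigma(B(y))=\sigma(\hat S)\subset\mathbb R$. To locate the spectrum I would rewrite the quadratic form of $S$: for $u\in\mathbb R^N$,
\[
u^{\T}S u = \sum_{i} S_{ii}u_i^2 + \sum_{i\neq j}f_{ij}u_iu_j = -\sum_{i<j} f_{ij}\Big(\sqrt{\tfrac{y_j}{y_i}}\,u_i-\sqrt{\tfrac{y_i}{y_j}}\,u_j\Big)^2 \le 0,
\]
since each $f_{ij}>0$. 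As $v^{\T}\hat S v=(Y^{1/2}v)^{\T}S(Y^{1/2}v)\le 0$ for all $v$, the matrix $\hat S$ is negative semidefinite, so $\sigma(B(y))\subset(-\infty,0]$.

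Finally, $\N(B(y))=\N(S)$ because $Y$ is invertible, and since $S$ is negative semidefinite, $Sz=0$ is equivalent to the vanishing of the sum of squares above; as every $f_{ij}>0$ this forces $\sqrt{y_j/y_i}\,z_i=\sqrt{y_i/y_j}\,z_j$ for all $i,j$, i.e. $z_i/y_i$ is independent of $i$, hence $\N(B(y))=\spa\{y\}$, which is one-dimensional since $y\neq 0$ on $\mathring{\DD}$. The range statement then follows from rank--nullity: $\dim\R(B(y))=N-1=\dim\EE$, which together with the inclusion $\R(B(y))\subseteq\EE$ from the first part yields $\R(B(y))=\EE$. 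The main obstacle, and the reason the similarity transformation is essential, is that the spectral claim asks for \emph{real, nonpositive} eigenvalues: a Perron--Frobenius argument for the quasi-positive irreducible matrix $B(y)$ would only show that $0$ is a simple dominant eigenvalue with all others in $\{\Re z<0\}$, whereas the symmetrizability via $Y^{-1}$ delivers reality of the spectrum, its nonpositivity, the kernel, and (with rank--nullity) the range simultaneously.
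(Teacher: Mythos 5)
Your proof is correct, and it is worth noting that the paper itself gives no proof of this lemma at all: it simply cites \cite[Section 5]{Bothe} and \cite[Lemma 3]{JS13}, attributing the result to the Perron--Frobenius theory of irreducible quasi-positive matrices. Your route is therefore genuinely different from the one the paper points to. The Perron--Frobenius argument runs as follows: $B(y)+cI$ is nonnegative and irreducible for large $c>0$ (since all $f_{ij}>0$ and $y>0$ on $\mathring{\DD}$), it has the positive eigenvector $y$ with eigenvalue $c$, so $c$ is the spectral radius and the corresponding eigenspace is one-dimensional; this yields $\N(B(y))=\spa\{y\}$, the range statement by rank--nullity together with $\ee^{\T}B(y)=0$, and $\sigma(B(y))\subset\{|z+c|\le c\}$, hence $\operatorname{Re}z\le 0$ --- but, as you correctly observe, not reality of the spectrum by itself. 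Your symmetrization $S=Y^{-1}B(y)$, equivalently the similarity to $\hat S=Y^{-1/2}B(y)Y^{1/2}$, is exactly the matrix $B_S(y)$ that the paper introduces later in Section 2 (and uses in Lemmas 2.3 and 2.4), so your argument is in fact more consistent with the paper's own toolkit: the sum-of-squares identity for $u^{\T}Su$ delivers reality, nonpositivity, the kernel (a negative semidefinite symmetric $S$ satisfies $Sz=0$ iff $z^{\T}Sz=0$, which with $f_{ij}>0$ forces $z_i/y_i$ constant), and the range in one stroke, entirely elementarily and without invoking Perron--Frobenius. All computations check out against the definition of $b_{ij}(y)$ in \eqref{MSLaw}, including the column-sum identity $\ee^{\T}B(y)=0$ from $f_{ij}=f_{ji}$.
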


This lemma shows in particular that $B(y)$ may be restricted to an
element $B(y)|_{\EE}$ of $\mathcal B(\EE)$ for all
$y\in \mathbb R^N$. We show that $B(y)|_{\EE}$ is invertible for $y$ from a larger set containing $\DD$.

 \begin{Lem}\label{Lem:Inverse}
There is an open neighbourhood $U\subset \mathbb R^N$ of $\mathds
D$ such that for all $y\in U$ the restriction $B(y)|_{\EE}$
of $B(y)$ to $\EE$ is invertible. Denote its inverse by
$A(y) = (B(y)|_{\EE})^{-1}$. Then there are real analytic
functions $a_{i}^0, a_{ij}^1\colon U\to\mathbb R$ such that for all
$y\in U$ and $h\in \EE$ the vector $x
= A(y)h$  may be represented by
$$x_i = -a_i^0(y)h_i + y_i \sum_{j^\neq i} a_{ij}^1(y)h_j, \qquad i=1,...,N.$$
Moreover, we have $a_i^0(y) > 0$ for $y_i = 0$.
\end{Lem}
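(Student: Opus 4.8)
The plan is to reduce the entire statement to the single fact that $\N(B(y))=\spa\{y\}$ for \emph{every} $y\in\DD$, and not merely for $y\in\mathring{\DD}$ as provided by Lemma \ref{Lem:B}. Granting this, invertibility of $B(y)|_{\EE}$ is cheap: since $\R(B(y))\subseteq\EE$ for all $y$ by Lemma \ref{Lem:B} (equivalently $\ee^\T B(y)=0$, the columns summing to zero), and since $(y|\ee)=1$ forces $y\notin\EE$, the identity $\N(B(y))=\spa\{y\}$ gives $\N(B(y))\cap\EE=\{0\}$ and, by a dimension count, $\R(B(y))=\EE$; hence $B(y)|_{\EE}\in\mathcal B(\EE)$ is bijective. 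Invertibility is thus an open condition cut out by the non-vanishing of a polynomial in $y$, so it persists on an open neighbourhood $U\supseteq\DD$, and real-analyticity of $A(y)=(B(y)|_{\EE})^{-1}$ on $U$ follows from Cramer's rule.

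To prove $\N(B(y))=\spa\{y\}$ on all of $\DD$ I would split $\{1,\dots,N\}$ into the support $S=\{i:y_i>0\}$ and its complement $Z=\{i:y_i=0\}$. Writing $B(y)=YF-D$ with $F=[f_{ij}]$, $Y=\diag(y_k)$ and $D=\diag\big(\sum_l f_{il}y_l\big)$, one sees that for $i\in Z$ the whole off-diagonal part of row $i$ vanishes (it carries the factor $y_i=0$), while the diagonal entry $-\sum_l f_{il}y_l$ is strictly negative because $y\in\DD$ forces some $y_l>0$ with $l\neq i$. Hence any $v\in\N(B(y))$ satisfies $v_i=0$ on $Z$, and on $S$ the equation reduces exactly to $B_S(y_S)v_S=0$, where $B_S$ is the Maxwell--Stefan matrix of the subsystem indexed by $S$ and $y_S$ has strictly positive entries summing to $1$, i.e. $y_S\in\mathring{\DD}_{|S|}$. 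Applying Lemma \ref{Lem:B} to this subsystem (the case $|S|=1$ being trivial) yields $v_S\in\spa\{y_S\}$, whence $v\in\spa\{y\}$. This support-reduction is the main obstacle, as it is the only place where the boundary behaviour genuinely differs from the interior and where irreducibility of $B(y)$ is lost.

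The structural representation is the technical heart, and the idea is to replace $B(y)$ by a genuinely invertible rank-one perturbation that preserves the crucial row structure. I would set $\hat B(y)=B(y)+y\otimes\ee$ (matrix $B(y)+y\,\ee^\T$). Testing $\hat B(y)v=0$ with $\ee$ and using $\ee^\T B(y)=0$, $(y|\ee)=1$ gives $(v|\ee)=0$, hence $B(y)v=0$ and $v\in\spa\{y\}$, forcing $v=0$; so $\hat B(y)$ is invertible on $\DD$ and $\det\hat B$ furnishes $U$ directly. For $h\in\EE$ the same test shows that $x:=\hat B(y)^{-1}h$ obeys $(x|\ee)=0$ and $B(y)x=h$, so $A(y)h=\hat B(y)^{-1}h$ and it suffices to read off the entries of $\hat B(y)^{-1}$. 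Now row $i$ of $\hat B(y)$ has off-diagonal entries $y_i(f_{ij}+1)$, all divisible by $y_i$; consequently, in the cofactor $C_{ji}$ (delete row $j\neq i$, column $i$) the surviving entries of the original row $i$ are exactly these, so expanding along that row shows $C_{ji}$ is divisible by $y_i$ as a polynomial. By Cramer's rule $[\hat B(y)^{-1}]_{ij}=C_{ji}/\det\hat B(y)=y_i\,a_{ij}^1(y)$ with $a_{ij}^1$ real-analytic on $U$, while $a_i^0:=-[\hat B(y)^{-1}]_{ii}$ collects the diagonal; this is precisely $x_i=-a_i^0(y)h_i+y_i\sum_{j\neq i}a_{ij}^1(y)h_j$.

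Finally, for the sign I would evaluate row $i$ of $B(y)x=h$ at a point with $y_i=0$: the off-diagonal terms drop out and one is left with $-\big(\sum_l f_{il}y_l\big)x_i=h_i$, whereas the representation gives $x_i=-a_i^0(y)h_i$ there. Choosing $h\in\EE$ with $h_i\neq0$ (possible since $N\geq2$) and comparing yields $a_i^0(y)\big(\sum_l f_{il}y_l\big)=1$, so $a_i^0(y)=\big(\sum_l f_{il}y_l\big)^{-1}$. Since $y\in\DD$ with $y_i=0$ forces $\sum_l f_{il}y_l>0$, this is positive; after shrinking $U$ so that this sum stays positive wherever $y_i=0$ (its zero set is disjoint from the compact slice $\{y\in\DD:y_i=0\}$), the claim $a_i^0(y)>0$ holds on all of $U$.
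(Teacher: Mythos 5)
Your proof is correct and follows essentially the same strategy as the paper: invertibility on $\DD$ by eliminating the components with $y_i=0$ (whose rows have vanishing off-diagonal part and strictly negative diagonal) and reducing to the interior case of Lemma \ref{Lem:B}, the representation via Cramer's rule together with the observation that the relevant cofactors inherit the factor $y_i$ from row $i$, and the sign of $a_i^0$ read off from the $i$-th row of $B(y)x=h$ when $y_i=0$. The only, immaterial, difference is that you invert the rank-one perturbation $B(y)+y\,\ee^{\T}$ instead of the paper's bordered matrix $\sMatrix{B(y) & y \\ \ee^{\T} & 0}$; the two devices are equivalent and yield the same cofactor computation.
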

\begin{proof} \emph{Step 1.} We show that $B(y)|_{\EE}$ is invertible for $y\in \DD$. For
$y\in \mathring{\DD}$ this follows already from Lemma
\ref{Lem:B}. So let $y\in \DD$ be such that $y_{k} =0$ for
some  $1\leq k\leq N$. Assume $B(y)x = 0$ for $x\in \EE$.
We show  $x=0$. The structure of $B(y)$ from \eqref{MSLaw}
implies that $b_{kj}  = 0$ for $j\neq k$ and $b_{kk} =
-\sum_{l=1}^N f_{kl} y_l$. Thus $b_{kk}x_k = 0$. Because of
$f_{kl} > 0$ and $(\textsf{e}|y)=1$ we have $b_{kk} \neq 0$, and
therefore $x_k = 0$. In this way $B(y)x = 0$ reduces to $\hat
B(\hat y) \hat x = 0$, where the $(N-1)\times (N-1)$-matrix $\hat
B(\hat y)$ results from deleting the $k$-th row and the $k$-th
column of $B(y)$, and $\hat \xi = (\xi_1,\ldots, \xi_{k-1},
\xi_{k+1},\ldots, \xi_N)^{\textsf{T}}$ for $\xi\in \mathbb R^N$.
Since $(\hat {\textsf{e}}|\hat y)=1$, the matrix $\hat B(\hat y)$
has the same structure as $B(y)$ in \eqref{MSLaw}. Hence, if
other components $y_{k_2},\ldots, y_{k_m}$ of $y$ vanish, we may
argue as before to obtain $x_{k_2}, ..., x_{k_m} = 0$. In case
$m=N-1$ we immediately obtain $x=0$ since $x\in \EE$. If $m
< N-1$, the remaining components $\tilde x$ of $x$ satisfy $\tilde
B (\tilde y) \tilde x = 0$, where $\tilde B (\tilde y)$ is again
as in \eqref{MSLaw}, $(\tilde{\textsf{e}}|\tilde y) = 1$ and the
components of $\tilde y$ do not vanish. Since $(\tilde
{\textsf{e}}|\tilde x) = 0$, Lemma \ref{Lem:B} applies to $\tilde
B (\tilde y)$ and shows that $\tilde x = 0$. Altogether, it
follows that $x=0$, hence $B(y)|_{\EE}$ is injective. As
$\EE$ is finite dimensional we obtain the invertibility of
$B(y)|_{\EE}$ for all $y\in \DD$. Since $B(y)$
depends continuously on $y$, we obtain an open neighbourhood $U$
of $\DD$ such that $B(y)|_{\EE}$ is invertible for
all $y\in U$.

\emph{Step 2.} To investigate the structure of $A(y) = (B(y)|_{\EE})^{-1}$ for $y\in U$ we introduce the matrix
$$D(y)= \Matrix{B(y) & y \\ \ee^{\textsf T} & 0}.$$
We claim that $D(y)$ is invertible on $\mathbb R^{N+1}$. Indeed,
for given $h\in \mathbb R^N$ and $\beta \in \mathbb R$ the
solution $\Matrix{x \\\alpha} \in \mathbb R^{N+1}$ of $D(y)
\Matrix{x
\\\alpha} = \Matrix{h \\\beta}$ is
$$x = (B(y)|_{\EE})^{-1}(h- (\textsf{e}|h)y) + \beta y, \qquad \alpha = (\textsf{e}|h).$$
Now fix $h\in \EE$. With $\beta = 0$, this yields a
representation of $x = A(y)h$ in terms of $D(y)^{-1}$, \ie,
$\Matrix{x \\ 0} = D(y)^{-1}\Matrix{h \\0}$. Let $D_i(y)$ be the
matrix that results from replacing the $i$-th column of $D(y)$ by
$\Matrix{h \\0}$. Then $x_i = \frac{\det D_i(y)}{\det D(y) }$ for
$i=1,\ldots,N$ by Cramer's rule. Developing $ D_i(y)$ with respect to
the $i$-th column, we obtain $\det D_i(y) = \sum_{j=1}^N
(-1)^{i+j}h_j \det \hat D^{ji}(y)$, where $\hat D^{ji}(y)$ is the
matrix that results from deleting the $j$-th row and the $i$-th
column of $D(y)$. Now assume $j\neq i$. By \eqref{MSLaw}, a row
of $\hat D^{ji}(y)$ is given by $y_i(f_{i1},\ldots, f_{i,i-1},
f_{i,i+1},..., f_{i,N-1}, 1)$. Developing $\hat D^{ji}(y)$ with
respect to this row, we obtain that $\det \hat D^{ji}(y)$ is a
multiple of $y_i$. This yields the representation
$$x_i = -a_{i}^0(y) h_i + y_i \sum_{j\neq i}a_{ij}^1(y)h_j,$$
with coefficients analytic in $y\in U$. It remains to prove that $a_{i}^0(y) > 0$ for $y_i = 0$. In this case the structure of $B(y)$ yields $b_{ii} x_i = h_i$, where $b_{ii} = - \sum_{j=1}^N f_{ij} y_j < 0$ for $y$ sufficiently close to $\DD$. Hence $a_{i}^0(y) = -1/b_{ii} > 0$. We have thus shown that $x = (B(y)|_{\EE})^{-1}h$ may be represented as asserted.
\end{proof}

We next investigate the spectrum on $\EE$ of the negative flux matrix $A_0(y)=-A(y)P(y)M^{-1}$ in \eqref{MS}. To this end
we employ a well-known symmetrization of $B(y)$ for $y \in \mathring{\DD}$. Define
$$y^{1/2} = (y_1^{1/2}, \ldots, y_n^{1/2} )^{\textsf{T}},\qquad Y^{1/2} = \diag(y^{1/2}).$$
Then we have 
\[ B_S(y) := Y^{-1/2} B(y) Y^{1/2} =\Matrix{-s_1 & & \bar{d}_{ij}\\ &\ddots & \\  \bar{d}_{ij} & &-s_n}, \]
where $s_i= \sum_{k=1}^N f_{ik}y_k$ and $  \bar{d}_{ij} =
f_{ij}(y_iy_j)^{1/2}$. Observe that $B_S(y)$ is symmetric and
$\sigma(B_S(y)) \subset (-\infty,0]$ by Lemma \ref{Lem:B}. Its
kernel and range $B_S(y)$ are given by $\N(B_S(y)) =
\spa\{y^{1/2}\}$ and $\R(B_S(y)) = \{y^{1/2}\}^\bot$, such that
$B_S(y)$
 is invertible on $\{y^{1/2}\}^\bot$.

\begin{Lem} \label{lem:specA_0}Consider $A_0(y)$ as an element of $\mathcal B(\EE)$. Then there is an open neighbourhood $U\subset \mathbb R^N$ of $\DD$ such that for all $y\in U$ the spectrum of $A_0(y)$ belongs to $\{\text{\emph{Re}}\,z > 0\}$.
\end{Lem}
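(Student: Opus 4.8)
The plan is to show first that for every $y\in\DD$ the operator $A_0(y)$ has only real, strictly positive eigenvalues on $\EE$, and then to propagate this spectral bound to an open neighbourhood of the compact set $\DD$ by continuity. The starting observation is that an eigenpair $A_0(y)v=\lambda v$ with $0\neq v\in\EE$ can be rewritten as a generalized eigenvalue relation. Indeed, for $y\in\DD$ one has $(\textsf{e}|y)=1$, so $P(y)M^{-1}v\in\EE$; since $A(y)=(B(y)|_{\EE})^{-1}$ and $A_0(y)v\in\EE$ as well, applying $B(y)$ to $-A(y)P(y)M^{-1}v=\lambda v$ yields
\[ -P(y)M^{-1}v=\lambda B(y)v. \]

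For $y\in\mathring{\DD}$ I would test this identity with the weighted vector $Y^{-1}v$, which is well defined since all $y_i>0$. Writing $B(y)=Y^{1/2}B_S(y)Y^{-1/2}$ and $w=Y^{-1/2}v$, the right-hand side gives $(-B(y)v\,|\,Y^{-1}v)=(-B_S(y)w\,|\,w)$, which is real and strictly positive: $-B_S(y)$ is symmetric positive semidefinite with kernel $\spa\{y^{1/2}\}$, and $w\in\spa\{y^{1/2}\}$ would force $v\in\spa\{y\}$, impossible for $0\neq v\in\EE$. On the left-hand side, expanding $P(y)M^{-1}v=M^{-1}v-(\textsf{e}|M^{-1}v)\,y$ gives $(P(y)M^{-1}v\,|\,Y^{-1}v)=(M^{-1}v\,|\,Y^{-1}v)-(\textsf{e}|M^{-1}v)(y\,|\,Y^{-1}v)$, and the factor $(y\,|\,Y^{-1}v)$ collapses to $\sum_i v_i=0$ because $v\in\EE$. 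Hence only $(M^{-1}v\,|\,Y^{-1}v)=\sum_i |v_i|^2/(M_iy_i)>0$ survives, and $\lambda$ is a quotient of two strictly positive reals, so $\lambda\in(0,\infty)$.

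The genuinely delicate point, and the main obstacle, is a boundary point $y\in\DD$ with nonempty zero set $Z=\{i:y_i=0\}$, where the symmetrization degenerates. Here I would instead exploit the block structure of $B(y)$. For $i\in Z$ one has $b_{ij}=f_{ij}y_i=0$ for $j\neq i$ and $b_{ii}=-\sum_{l\notin Z}f_{il}y_l<0$ (the sum is nonempty since the $y_l$ add up to one), so the $i$-th row of $B(y)$ is purely diagonal, while $(P(y)M^{-1}v)_i=(M^{-1}v)_i$ since $y_i=0$. The $i$-th component of the generalized relation therefore reads $v_i(\lambda b_{ii}+M_i^{-1})=0$ for every $i\in Z$. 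If $v_i\neq0$ for some $i\in Z$, then $\lambda=-1/(M_ib_{ii})>0$ is forced, hence real and positive. Otherwise $v$ is supported on the complement $Z^c$, and restricting the relation to the $Z^c$-block reproduces exactly the generalized eigenvalue relation for the reduced system of $|Z^c|$ species, whose strictly positive mass fractions sum to one; the interior argument of the previous paragraph then applies to this subsystem and again yields $\lambda\in(0,\infty)$. In all cases $\sigma(A_0(y))\subset(0,\infty)$ for every $y\in\DD$.

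It remains to pass from $\DD$ to an open neighbourhood $U$. By Lemma \ref{Lem:Inverse} the coefficients of $A(y)$, and hence of $A_0(y)$, are real analytic near $\DD$, so the eigenvalues depend continuously on $y$; since $\DD$ is compact and $\sigma(A_0(y))\subset\{\Re\,z>0\}$ there, a standard argument produces $U$. Concretely, were there no such $U$, one could pick $y_n\to y_*\in\DD$ with an eigenvalue $\lambda_n$ of $A_0(y_n)$ satisfying $\Re\,\lambda_n\le0$; the $\lambda_n$ stay bounded, and a convergent subsequence yields an eigenvalue $\lambda_*$ of $A_0(y_*)$ with $\Re\,\lambda_*\le0$, contradicting the result on $\DD$. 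This establishes the claim, in fact with the sharper conclusion $\sigma(A_0(y))\subset(0,\infty)$ on $\DD$.
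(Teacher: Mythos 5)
Your proof is correct and takes essentially the same route as the paper's: rewrite the eigenvalue equation as $-P(y)M^{-1}v=\lambda B(y)v$, test against $Y^{-1}v$ and use the symmetrization $B_S(y)$ for $y\in\mathring\DD$, handle boundary points of $\DD$ via the diagonal structure of the rows indexed by the vanishing components (forcing either $\lambda>0$ or a reduction to a smaller interior system), and then pass to an open neighbourhood by continuity and compactness of $\DD$. The only cosmetic difference is that you eliminate all vanishing components at once, whereas the paper peels them off inductively one at a time.
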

\begin{proof} As $A_0$ depends continuously on $y$, it suffices to show that $\sigma_{\EE} (A_0(y)) \subset (0,\infty)$ for $y\in \DD$, since then we obtain $\sigma_{\EE} (A_0(y)) \subset \{\text{Re}\,z > 0\}$  for all $y$ from a sufficiently small neighbourhood $U$ of $\DD$. Throughout, let $\lambda$ be an eigenvalue of $A_0(y)$ with eigenvector $v\in \EE$, such that $P(y)M^{-1} v= - \lambda B(y)v$.

\emph{Step 1.} Assume $y \in \mathring {\DD}$. Using that $Y^{-1} = Y^{-1} P(y) + (\cdot|\textsf{e})\textsf{e}$ and $(v|\textsf{e}) = 0$, we get
\[0<\scalar{v}{Y^{-1}M^{-1} v}=\scalar{v}{Y^{-1}P(y)M^{-1} v} = -\lambda\! \scalar{v}{ Y^{-1} B(y) v} =-\lambda \!\scalar{w}{ B_S(y) w},\]
where $w=Y^{-1/2}v$. Since $B_S(y)$ is negative semidefinite, we
obtain $\lambda > 0$.

\emph{Step 2.} Assume $y \in \DD$ is such that $y_k = 0$ for some $1\leq k \leq N$.  We write
\begin{equation}\label{666}
 -\lambda B(y) v = P(y)M^{-1}v = M^{-1}v - (M^{-1}v|\textsf{e})y.
\end{equation}
By the structure of $B(y)$ from \eqref{MSLaw}, here the $k$-th equation reads $-\lambda b_{kk}(y) v_k = M_k^{-1} v_k$,
where $b_{kk}(y) <0$. Hence we either have $\lambda > 0$ and are finished, or $v_k = 0$. In the latter case, the equation
\eqref{666} reduces to
$$-\lambda \hat B(\hat y) \hat v = \hat M^{-1}\hat v - (\hat M^{-1}\hat v|\hat{\textsf{e}})\hat y = \hat P(\hat y)\hat M^{-1}\hat v,$$
where the hat means to delete the $k$-th row and the $k$-th
column for a matrix and to delete the $k$-th entry for a vector.
If $y$ has no further vanishing components we are in the
situation of Step 1 and conclude $\lambda > 0$. Otherwise, if
$y_{k_2}, \ldots,y_{k_m} = 0$, we obtain inductively that either
$\lambda > 0$ or $v_{k_2}, \ldots, v_{k_m} = 0$, where necessarily
$m<N-1$. In the latter case, as above we can reduce to the
situation of Step 1, and $\lambda
> 0$ follows.
\end{proof}

For later purposes we investigate $-A(y)P(y)Y$ in more detail.

\begin{Lem} \label{Lem:4} For $y\in \mathring{\DD}$ the matrix $-A(y)P(y)Y$ is symmetric and positive semi-definite.
 The restriction $-A(y)P(y)Y|_{\EE}$ is positive definite.
\end{Lem}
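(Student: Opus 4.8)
The plan is to leverage the symmetrization $B_S(y) = Y^{-1/2}B(y)Y^{1/2}$ introduced just above, together with the fact that $Y^{\pm 1/2}$ intertwine $\EE$ and $\{y^{1/2}\}^\bot$. First I would record the two bookkeeping identities. For $z\in\EE$ one has $(Y^{-1/2}z\,|\,y^{1/2}) = \sum_k y_k^{-1/2}z_k\,y_k^{1/2} = (\ee|z) = 0$, so $Y^{-1/2}$ maps $\EE$ into $\{y^{1/2}\}^\bot$; symmetrically $Y^{1/2}$ maps $\{y^{1/2}\}^\bot$ into $\EE$, and since $Y^{1/2}Y^{-1/2}=I$ these are mutually inverse bijections (for $y\in\mathring\DD$ all $y_k>0$, so $Y^{\pm1/2}$ are invertible). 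Combined with $B(y)=Y^{1/2}B_S(y)Y^{-1/2}$ this yields the conjugation formula $A(y)=Y^{1/2}A_S(y)Y^{-1/2}$ on $\EE$, where $A_S(y)$ denotes the inverse of $B_S(y)$ on $\{y^{1/2}\}^\bot$, extended by zero on its kernel $\spa\{y^{1/2}\}$. As $B_S(y)$ is symmetric, negative semidefinite, with kernel exactly $\spa\{y^{1/2}\}$, the pseudoinverse $A_S(y)$ is symmetric, negative semidefinite, and \emph{negative definite} on $\{y^{1/2}\}^\bot$.

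Next I would compute the composition explicitly. Since $P(y)Yz = Yz - (\ee|Yz)y = Yz - (y|z)y$, applying $Y^{-1/2}$ gives $Y^{-1/2}P(y)Yz = Y^{1/2}z - (y|z)y^{1/2}$, which indeed lies in $\{y^{1/2}\}^\bot$ (its inner product with $y^{1/2}$ is $(z|y)-(y|z)=0$, using $(y^{1/2}|y^{1/2})=\sum_k y_k =1$). In fact this vector is precisely the orthogonal projection of $Y^{1/2}z$ onto $\{y^{1/2}\}^\bot$, so applying $A_S(y)$, which annihilates $y^{1/2}$, makes the $(y|z)y^{1/2}$ term drop out:
$$ -A(y)P(y)Yz = -Y^{1/2}A_S(y)\big(Y^{1/2}z - (y|z)y^{1/2}\big) = -Y^{1/2}A_S(y)Y^{1/2}z. $$
Thus, as an $N\times N$ matrix, $-A(y)P(y)Y = -Y^{1/2}A_S(y)Y^{1/2}$.

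From this representation symmetry is immediate, as $Y^{1/2}$ and $A_S(y)$ are symmetric; and positive semidefiniteness follows from $(z\,|\,-Y^{1/2}A_S(y)Y^{1/2}z) = -(w\,|\,A_S(y)w)\ge 0$ with $w=Y^{1/2}z$, since $A_S(y)$ is negative semidefinite. For the restriction to $\EE$ I would take $z\in\EE\setminus\{0\}$; then $w=Y^{1/2}z$ is a nonzero element of $\{y^{1/2}\}^\bot$ (because $Y^{1/2}$ is invertible and carries $\EE$ into $\{y^{1/2}\}^\bot$), so $-(w\,|\,A_S(y)w) > 0$ by the negative definiteness of $A_S(y)$ on $\{y^{1/2}\}^\bot$, giving positive definiteness on $\EE$.

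The only point requiring genuine care is the conjugation formula $A(y)=Y^{1/2}A_S(y)Y^{-1/2}$ and the cancellation of the $(y|z)y$ term; both rest on the intertwining of $\EE$ with $\{y^{1/2}\}^\bot$ by $Y^{\pm1/2}$ and on $A_S(y)y^{1/2}=0$. Once these identities are verified, symmetry and (semi)definiteness transfer directly from the corresponding spectral properties of $B_S(y)$ recorded above, and no further computation is needed.
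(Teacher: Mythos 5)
Your factorization $-A(y)P(y)Y = -Y^{1/2}A_S(y)Y^{1/2}$ is correct and is a genuinely cleaner organization than the paper's argument. The paper proves symmetry by a chain of inner-product identities built on $A(y)=Y^{1/2}(B_S(y)|_{\{y^{1/2}\}^\perp})^{-1}Y^{-1/2}$, $P(y)Y=YP(y)^{\mathsf T}$ and $P_{y^{1/2}}=Y^{-1/2}P(y)Y^{1/2}$, and then obtains positive definiteness on $\EE$ separately by rerunning the eigenvalue computation of Step 1 of Lemma \ref{lem:specA_0} with $M^{-1}$ replaced by $Y$; semidefiniteness on all of $\mathbb R^N$ then comes from $\ee\in\N(-A(y)P(y)Y)$. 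Your closed-form identity delivers symmetry and both definiteness statements in one stroke from the spectral properties of $B_S(y)$, and the key steps --- the conjugation formula, the cancellation of the $(y|z)y^{1/2}$ term via $A_S(y)y^{1/2}=0$, and the semidefiniteness computation --- are all verified correctly.

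There is, however, one slip in the final step. You assert that for $z\in\EE\setminus\{0\}$ the vector $w=Y^{1/2}z$ lies in $\{y^{1/2}\}^\perp$ ``because $Y^{1/2}$ carries $\EE$ into $\{y^{1/2}\}^\perp$''. This contradicts your own (correct) bookkeeping from the first paragraph: it is $Y^{-1/2}$ that maps $\EE$ into $\{y^{1/2}\}^\perp$, while $Y^{1/2}$ maps $\{y^{1/2}\}^\perp$ into $\EE$; indeed $(Y^{1/2}z\,|\,y^{1/2})=(y|z)$, which need not vanish for $z\in\EE$. The conclusion survives with a one-line repair: since $A_S(y)$ annihilates $\spa\{y^{1/2}\}$ and has range in $\{y^{1/2}\}^\perp$, one has $-(w\,|\,A_S(y)w)=-(P_{y^{1/2}}w\,|\,A_S(y)P_{y^{1/2}}w)$, and $P_{y^{1/2}}w\neq 0$ because $Y^{1/2}z\in\spa\{y^{1/2}\}$ would force $z\in\spa\{\ee\}\cap\EE=\{0\}$. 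Negative definiteness of $A_S(y)$ on $\{y^{1/2}\}^\perp$ then gives the claim. With that fix the proof is complete.
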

\begin{proof}
To show the symmetry we let $P_{y^{1/2}}= I -
(\cdot|y^{1/2})y^{1/2} $ be the orthogonal projection  onto
$\{y^{1/2}\}^{\bot}$. Observing that $A(y) = Y^{1/2}
(B_S(y)|_{\EE})^{-1} Y^{-1/2}$, $P(y)Y = Y P(y)^\textsf{T}$
and $P_{y^{1/2}} = Y^{-1/2} P(y) Y^{1/2}$, and recalling that the
range of $(B_S(y)|_{\EE})^{-1}$ equals
$\{y^{1/2}\}^{\bot}$, for $v,w\in \mathbb R^{N}$ we calculate
\begin{align*}
\left (A(y)P(y)Y v | w \right )
&= \left (Y^{1/2}(B_S(y)|_{\EE})^{-1} Y^{1/2} P(y)^{\textsf{T}}  v \middle| w \right ) \\
%&= \left ((B_S(y)|_E)^{-1} Y^{1/2} P(y)^{\textsf T}  v \middle| Y^{1/2} w \right )\\
&= \left (P_{y^{1/2}}(B_S(y)|_{\EE})^{-1} Y^{1/2} P(y)^{\textsf{T}}  v \middle| Y^{1/2} w \right )\\
&= \left (v \middle|P(y) Y^{1/2} (B_S(y)|_{\EE})^{-1} P_{y^{1/2}} Y^{1/2} w \right )\\
%&= \left (v \middle|P(y) Y^{1/2} (B_S(y)|_E)^{-1} Y^{1/2} P(y) Y w \right )\\
&= \left (v \middle| A(y) P(y)Y  w \right ).
\end{align*}
The inclusion $\sigma(-A(y)P(y)Y|_{\EE}) \subseteq (0,\infty)$
follows as in Step 1 of the proof of Lemma \ref{lem:specA_0},
replacing $M^{-1}$ by $Y$. Hence $-A(y)P(y)Y|_{\EE}$ is
positive definite. Since $\mathbb R^N =
\text{span}\{\textsf{e}\}\oplus \EE$ and $\textsf{e} \in
\N(-A(y)P(y)Y)$, we obtain that $-A(y)P(y)Y$ is positive
semi-definite.
\end{proof}

%%%%%%%%%%%%%%%%%%%%%%%%%%%%%%%%%%%%%%%%%%%%%%%%%%%%%%%%%%%%%%%%%%%%%%%%%%%%%%%%%%%%%%%%%%%%%%%%%%%%%%%%%%%%%%%%%%%%%%%%%%%%%%%%%%%%%%%%%%%%%%%%
\section{Well-posedness, regularity and positivity}\label{sec:wrp}

\subsection{Well-posedness} We apply the general results from \cite{KohPruWil, Bari}, which are summarized in the appendix, to obtain local-in-time well-posedness for \eqref{MS}. Let us first reformulate \eqref{MS} in the abstract form \eqref{eq:Abstrakt}, i.e., 
$$\dot u + \mathcal{A}(u)u=F(u),\quad  t >0,\qquad u(0)=u_0.$$
Define the spaces
$$X_0 =L_p(\Omega;\mathds{E}), \qquad X_1 =\{ u\in W_p^2(\Omega; \mathds E) \mid \partial_\nu u=0 \}.$$
In the sequel we will assume that $p>n+2$, wherefore the embedding $W_p^{2-2/p}(\Omega;\mathds{E})\hookrightarrow C^1(\overline{\Omega};\mathds{E})$ is at our disposal, see \cite[Theorem 4.6.1]{TriInterpolation}. Here the spaces $W_p^s$ for $s\notin\mathbb{N}$ denote the Sobolev-Slobodeckij spaces, see \cite[Section 4.2.1]{TriInterpolation}. In this case one also has
$$W_p^{2\mu-2/p}(\Omega;\mathds{E})\hookrightarrow C^1(\overline{\Omega};\mathds{E}),$$
provided that $\mu>\mu_0:=(n+2)/2p+1/2$. Note that for $u\in W_p^{2\mu-2/p}(\Omega;\mathds{E})$ with $\mu\in (\mu_0,1]$, the Neumann trace $\partial_\nu u$ on $\partial\Omega$ exists. Therefore the trace space $X_{\gamma,\mu}=(X_0,X_1)_{\mu-1/p,p}$ is given by
$$X_{\gamma,\mu}=\{u\in W_p^{2\mu-2/p}(\Omega;\mathds{E})\,:\,\partial_\nu u=0\},$$
see \cite[Theorem 4.3.3]{TriInterpolation}. Let $U\subset \mathbb R^N$ be the open neighborhood of $\mathds{D}$ from Lemma \ref{Lem:Inverse}, 
%MH
$$\V= U \cap (\ee/N + \EE)$$
%MH
a relative open set in $\ee/N+\EE$ containing $\DD$,
and define
$$V_\mu=\{u\in X_{\gamma,\mu}\,:\,u(\overline{\Omega})+{\ee}/N\in \V\}.$$
Then $V_\mu$ is an open subset of $X_{\gamma,\mu}$, since $X_{\gamma,\mu}\hookrightarrow C(\overline{\Omega};\mathds{E})$. For all $u\in V_\mu$ and all $v\in X_1$ we define the substitution operators $\mathcal{A}\colon V_\mu\to\mathcal{B}(X_1,X_0)$ and $F\colon V_\mu\to X_0$ by
\begin{align}
\mathcal{A}(u)v(x)&= -\Div (A_0(u(x)+{\ee}/N) [\nabla v(x)]^{\T}) \notag\\
&=  - A_0(u(x)+{\ee}/N)\Delta v(x)-\sum\limits_{j=1}^n \left[ \sum\limits_{l=1}^N \partial_{l} A_0(u(x)+{\ee}/N) \partial_{j}u_l(x)  \right] \partial_{j}v(x),\quad x\in\Omega\label{eq:opmathcalA},
\end{align}
and
$$F(u)(x)=Mr(u(x)+ \textsf{e}/N),\qquad x\in\Omega.$$
In order to apply Theorem \ref{The:lokaleExistenz} we have to show that for each $u\in V_\mu$ the operator $\mathcal{A}(u)$ has maximal regularity of type $L_p$ and that
$$(\mathcal{A},F)\in C^1(V_\mu;\mathcal{B}(X_1,X_0)\times X_0).$$
\begin{Lem}\label{Pro:Diffbarkeit}
Let $p>n+2$, $\mu\in (\mu_0,1]$ and assume that $r\in C^1(U;\mathbb R^N)$ satisfies $(Mr|\emph{\textsf{e}}) = 0$. Then $\mathcal A\in C^1(V_\mu; \B(X_1,X_0))$, $F\in C^1(V_\mu;X_0)$ and the derivative of $\mathcal{A}$ is given by
$$
[\mathcal A'(u) h] v = - [A_0'(u+{\ee}/N)]h\Delta v
-\sum\limits_{j=1}^n \left[ \sum\limits_{l=1}^N \partial_{j}h_l \partial_{l} A_0(u+{\ee}/N) + \partial_{j}u_l [\partial_{l} A_0'(u+{\ee}/N)]h  \right] \partial_{j}v,
$$
where $u\in V_\mu$, $v\in X_1$ and $h\in X_{\gamma,\mu}$.
\end{Lem}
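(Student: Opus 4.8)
The plan is to write $\mathcal A$ and $F$ as compositions of a few elementary maps that are manifestly $C^1$ (indeed real analytic or linear), and then to read off both the $C^1$-property and the derivative from the chain and product rules. The building blocks are: (i) the bounded linear differentiation maps $v\mapsto\Delta v$ and $v\mapsto\partial_j v$ from $X_1=W_p^2$ into $L_p$, and the bounded linear map $u\mapsto\nabla u$ from $X_{\gamma,\mu}$ into $C(\overline\Omega;\mathbb R^{n\times N})$ furnished by the embedding $X_{\gamma,\mu}\hookrightarrow C^1(\overline\Omega;\EE)$; (ii) superposition operators $u\mapsto g(u+\ee/N)$ induced by the real analytic $\B(\EE)$-valued functions $g\in\{A_0,\partial_l A_0\}$, where analyticity of $A_0=-A(\cdot)P(\cdot)M^{-1}$ on $U$ follows from Lemma \ref{Lem:Inverse} (and $(y|\ee)=1$ on $\V$ ensures $P(y)$ maps into $\EE$, so $A_0(y)\in\B(\EE)$); and (iii) the continuous bilinear pointwise multiplications $C(\overline\Omega;\B(\EE))\times C(\overline\Omega)\to C(\overline\Omega;\B(\EE))$ and $C(\overline\Omega;\B(\EE))\times L_p(\Omega;\EE)\to L_p(\Omega;\EE)$.

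The key preliminary is a superposition lemma: for $g\in C^1(U;Z)$ with $Z$ finite dimensional, the map $u\mapsto g(u+\ee/N)$ is $C^1$ from $V_\mu$ into $C(\overline\Omega;Z)$, with derivative $h\mapsto g'(u+\ee/N)h$. This follows from the identity $g(u+h+\ee/N)-g(u+\ee/N)-g'(u+\ee/N)h=\int_0^1[g'(u+\ee/N+th)-g'(u+\ee/N)]h\,dt$: since $u(\overline\Omega)+\ee/N$ is a compact subset of the open set $\V\subset U$, for $\|h\|_\infty$ small the path $u+\ee/N+th$ stays in $U$ for all $t\in[0,1]$, and by uniform continuity of $g'$ on compacta the integrand is $o(\|h\|_\infty)$ uniformly in $x$; continuity of $u\mapsto g'(u+\ee/N)$ is seen the same way. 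Here the embedding into $C(\overline\Omega;\EE)$ is essential, since on $L_p$ a Nemytskii operator of a genuinely nonlinear $g$ fails to be differentiable. Applied to $g=Mr\in C^1$, for which $(Mr|\ee)=0$ forces values in $\EE$, this immediately gives $F\in C^1(V_\mu;C(\overline\Omega;\EE))$, and composing with $C(\overline\Omega;\EE)\hookrightarrow L_p(\Omega;\EE)=X_0$ yields $F\in C^1(V_\mu;X_0)$.

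For $\mathcal A$ I would split $\mathcal A(u)v=-A_0(u+\ee/N)\Delta v-\sum_j b_j(u)\partial_j v$ with $b_j(u)=\sum_l\partial_l A_0(u+\ee/N)\,\partial_j u_l$. The principal part is the composition of the $C^1$ map $u\mapsto A_0(u+\ee/N)\in C(\overline\Omega;\B(\EE))$ from block (ii) with the bounded linear map $a\mapsto[v\mapsto -a\Delta v]\in\B(X_1,X_0)$, bounded because $\|a\Delta v\|_{L_p}\le\|a\|_\infty\|v\|_{X_1}$; the chain rule produces the summand $-[A_0'(u+\ee/N)]h\,\Delta v$. In $b_j$, the factor $u\mapsto\partial_l A_0(u+\ee/N)$ is $C^1$ into $C(\overline\Omega;\B(\EE))$ by the superposition lemma applied to $\partial_l A_0$ (which needs $A_0\in C^2$, again from analyticity), while $u\mapsto\partial_j u_l\in C(\overline\Omega)$ is linear and bounded by (i); their pointwise product (iii) makes $u\mapsto b_j(u)$ a $C^1$ map into $C(\overline\Omega;\B(\EE))$, and composing with the bounded linear map $a\mapsto[v\mapsto -a\partial_j v]$ (using $\|a\partial_j v\|_{L_p}\le C\|a\|_\infty\|v\|_{X_1}$) gives a $C^1$ contribution to $\mathcal A$. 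The product rule yields $b_j'(u)h=\sum_l\big([\partial_l A_0'(u+\ee/N)]h\,\partial_j u_l+\partial_l A_0(u+\ee/N)\,\partial_j h_l\big)$, which reproduces exactly the lower-order terms of the asserted formula for $\mathcal A'(u)h$.

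The main obstacle is precisely the differentiability of the superposition operators in block (ii): it forces all nonlinear dependence on $u$ to be captured in the $C$- and $C^1$-topologies rather than in $L_p$, which is why the restriction $\mu>\mu_0$, equivalently $2\mu-2/p>1+n/p$ and hence $X_{\gamma,\mu}\hookrightarrow C^1(\overline\Omega;\EE)$, cannot be dispensed with: it is what makes $\nabla u$ a continuous (thus $L_\infty$) coefficient, guaranteeing $b_j(u)\partial_j v\in L_p$, and what renders the Nemytskii estimate above uniform over $\overline\Omega$. Once these mapping properties are set up, the remaining verifications (boundedness of the multiplication maps and $A_0\in C^2$) are routine.
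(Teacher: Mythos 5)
Your proposal is correct and follows essentially the same route as the paper: both arguments reduce everything to the real analyticity of $A_0$ on $U$ (Lemma \ref{Lem:Inverse}), the $C^1$-differentiability of the induced superposition operators in the uniform topology, and the embedding $X_{\gamma,\mu}\hookrightarrow C^1(\overline{\Omega};\EE)$, after which the chain and product rules give the stated formula. The paper compresses this into ``it follows readily''; your write-up simply supplies the details (the integral-remainder estimate for the Nemytskii operators, the bounded bilinear multiplications, and the need for $A_0\in C^2$) that the paper leaves implicit.
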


\begin{proof}
We know from Lemma \ref{Lem:Inverse} that the mapping $[U\ni y\mapsto A_0(y)\in \mathcal{B}(\mathds{E})]$ is real analytic, in particular it is $C^1$. It follows readily that the mapping
$$[u\mapsto\mathcal{A}(u)],\quad \{u\in C^1(\overline{\Omega};\mathds{E}):u(\overline{\Omega})+\textsf{e}/N\subset \V\}\to \mathcal{B}(X_1;X_0),$$
is continuously Fr\'{e}chet differentiable. This in turn implies that $\mathcal{A}\in C^1(V_\mu;\mathcal{B}(X_1;X_0))$, since by assumption the embedding $X_{\gamma,\mu}\hookrightarrow C^1(\overline{\Omega};\mathds{E})$ is valid.
Applying the same strategy to $F$ yields $F\in C^1(V_\mu;X_0)$. 
\end{proof}

We will now show that for each $u\in V_\mu$ the operator $\mathcal{A}(u)$ has maximal regularity of type $L_p$. By Lemma \ref{lem:specA_0}, the principal part $\mathcal{A}_{\#}(u(x))=-A_0(u(x)+ \frac{1}{N}\textsf{e})\Delta$ of $\mathcal A(u(x)+ \frac{1}{N}\textsf{e})$ is \emph{normally elliptic} for each $u\in V_\mu$ and $x\in \overline{\Omega}$, i.e., $\sigma (-A_0(u(x)+ \frac{1}{N}\textsf{e})) \subset \{\text{Re}\,z > 0\}$. Furthermore, for each $u\in V_\mu$, the Neumann boundary operator $\partial_\nu$ satisfies the \emph{Lopatinskii-Shapiro condition} with respect to $\mathcal{A}_{\#}(u(x))$ (normal complementing condition, see \eg \cite[Section 8]{DenHiePru}). To be precise, it holds that for each $x\in\partial\Omega$, all $\lambda \in \overline{\mathbb{C}_+}$ and all $\xi \in \mathbb R^{n-1}$ with $|\lambda| + |\xi| \neq 0$ the only decaying solution $v\in C(\mathbb{R}_+;\mathds{E})$ of the ODE system
$$
\lambda v(\tau) - A_0(u(x)) ( - |\xi|^2 v(\tau) +  v''(\tau) )=0,\quad \tau>0, \qquad v'(0)=0,
$$
is $v=0$. This follows from the spectral properties of  $A_0(u(x)+\frac{1}{N}\textsf{e})$.

Therefore,  \cite[Theorem 8.2]{DenHiePru} yields that for each $u\in V_\mu$, the operator $\mathcal{A}(u)$, defined in \eqref{eq:opmathcalA}, has maximal regularity of type $L_p$. We are now in a position to apply Theorem \ref{The:lokaleExistenz} which yields the following well-posedness result for \eqref{MS}. 
\begin{The}\label{Sat:Haupt1}
Let $\Omega\subset\mathbb{R}^n$ be a bounded domain with boundary $\partial\Omega\in C^2$, let  $p>n+2$ and $\mu\in (\mu_0,1]$. Suppose that $r\in C^1(U;\mathbb{R}^N)$ and $(Mr(y)|\emph{\textsf{e}})=0$ for all $y\in \V$. Then the following assertions are valid.
\begin{itemize}
\item[a)] For each $y_0\in W_p^{2\mu-2/p}(\Omega;\mathbb R^N)$ with $y_0(\overline{\Omega})\subset \V$ and $\partial_\nu y_0=0$ at $\partial\Omega$, there exists $T>0$ and a unique solution
\begin{equation}\label{eq:regy}
y\in W_{p,\mu}^1(0,T;L_p(\Omega;\mathbb R^N))\cap L_{p,\mu}(0,T;W_p^2(\Omega;\mathbb R^N))\cap BUC(0,T;W_p^{2\mu-2/p}(\Omega;\mathbb R^N))
\end{equation}
of \eqref{MS} with $y(t,x)\in\V$ for all $(t,x)\in [0,T]\times\overline{\Omega}$.
\item[b)] Each local solution can be extended to a maximal solution defined on a maximal interval of existence $J(y_0)=[0,t^+(y_0))$ and \eqref{eq:regy} holds for each $T\in (0,t^+(y_0))$. The mapping $y_0\mapsto t^+(y_0)$ is lower semicontinuous and the mapping $y_0\mapsto y(\cdot,y_0)$ is continuously Fr\'{e}chet differentiable.
\item[c)] For each  $T\in (0,t^+(y_0))$ we have
$$y\in C^1((0,T];W_p^{2-2/p}(\Omega;\mathbb R^N))\cap C^{2-1/p}((0,T];L_p(\Omega;\mathbb R^N))\cap C^{1-1/p}((0,T];W_p^2(\Omega;\mathbb R^N)).$$
\end{itemize}
\end{The}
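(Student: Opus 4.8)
The plan is to recognize \eqref{MS} as a realization of the abstract quasilinear Cauchy problem \eqref{eq:Abstrakt} and to verify the two hypotheses of the abstract local existence theorem \ref{The:lokaleExistenz}, whose conclusions then transfer almost verbatim. First I would pass to the shifted unknown $u = y - \ee/N$, so that $u$ takes values in $\EE$ and solves $\dot u + \mathcal A(u)u = F(u)$, $u(0) = u_0 := y_0 - \ee/N$, on $X_0 = L_p(\Omega;\EE)$ with state space $V_\mu$; the Neumann condition $\partial_\nu y_0 = 0$ together with $y_0(\overline\Omega)\subset\V$ puts $u_0\in V_\mu$. The point of working on $\EE$ is that the affine constraint $(\ee|y)=1$ is then encoded in the functional-analytic setting and preserved automatically, since $V_\mu\subset\ee/N+\EE$ forces $(\ee|y(t))=1$ for as long as the solution exists.

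Second, I would check that the two substitution operators genuinely map into the asserted spaces, which is where the structural results of Section \ref{sec:MS} and the mass-conservation hypothesis enter. For $y$ with $(\ee|y)=1$ the projection $P(y) = I - (\cdot|\ee)y$ maps $\mathbb R^N$ into $\EE$, and by Lemma \ref{Lem:Inverse} the inverse $A(y) = (B(y)|_{\EE})^{-1}$ maps $\EE$ into itself; hence $A_0(y) = -A(y)P(y)M^{-1}\in\B(\EE)$ and consequently $\mathcal A(u)\in\B(X_1,X_0)$. For the reaction term, the assumption $(Mr(y)|\ee)=0$ for $y\in\V$ is precisely what guarantees $F(u) = Mr(u+\ee/N)\in X_0 = L_p(\Omega;\EE)$. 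Together with the embedding $X_{\gamma,\mu}\hookrightarrow C^1(\overline\Omega;\EE)$, valid since $p>n+2$ and $\mu>\mu_0$, which also makes $V_\mu$ open, this underlies the $C^1$-dependence of Lemma \ref{Pro:Diffbarkeit}, namely $(\mathcal A,F)\in C^1(V_\mu;\B(X_1,X_0)\times X_0)$.

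Third, I would confirm the maximal-regularity hypothesis: for each $u\in V_\mu$ the operator $\mathcal A(u)$ has maximal $L_p$-regularity. This is already in hand, for by Lemma \ref{lem:specA_0} the principal part $-A_0(u(x)+\ee/N)\Delta$ is normally elliptic for every $x\in\overline\Omega$, the Neumann operator $\partial_\nu$ satisfies the Lopatinskii--Shapiro condition with respect to it, and therefore \cite[Theorem 8.2]{DenHiePru} applies. With both hypotheses verified, Theorem \ref{The:lokaleExistenz} yields local existence and uniqueness of $u$ in the maximal-regularity class with time weight $\mu$, which is assertion a); the maximal continuation with lower semicontinuous $t^+$ and the $C^1$-solution map, which is b); and the instantaneous parabolic smoothing giving the Hölder-continuity statements of c). Undoing the shift, $y = u + \ee/N$ inherits all of these properties verbatim, differing from $u$ only by a constant, and $y(t,x)\in\V$ for all $(t,x)$ because $u$ stays in $V_\mu$ throughout its interval of existence.

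Since the genuinely analytic content --- normal ellipticity, the $C^1$-property of the substitution operators, and maximal $L_p$-regularity --- has been isolated in the preceding lemmas and in \cite{DenHiePru}, the main work for this theorem is conceptual rather than computational, namely setting up $\EE$ as the correct state space so that the hyperplane constraint $\sum_k y_k=1$ is automatically respected by both the diffusion and the reaction. I expect the one point requiring care to be the verification that reduction to $\EE$ loses no solutions of \eqref{MS} and that each of the regularity spaces in \eqref{eq:regy} corresponds exactly, after the shift, to the output of the abstract theorem.
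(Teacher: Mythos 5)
Your proposal matches the paper's argument essentially step for step: the paper likewise reformulates \eqref{MS} on $X_0=L_p(\Omega;\EE)$ via the shift $u=y-\ee/N$, establishes $(\mathcal A,F)\in C^1(V_\mu;\B(X_1,X_0)\times X_0)$ in Lemma \ref{Pro:Diffbarkeit}, verifies maximal $L_p$-regularity through normal ellipticity (Lemma \ref{lem:specA_0}) and the Lopatinskii--Shapiro condition via \cite[Theorem 8.2]{DenHiePru}, and then invokes Theorem \ref{The:lokaleExistenz}. The proposal is correct and takes the same route as the paper.
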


\subsection{Classical solutions}
In the situation of the above theorem, let us show that the  solution $y$ of \eqref{MS} is in fact classical, i.e.,
$$y\in C^1((0,T];C(\overline{\Omega};\mathbb R^N))\cap C((0,T];C^2(\overline{\Omega};\mathbb R^N))$$
for each $T\in (0,t^+(y_0))$ if $\partial\Omega \in C^{2+\alpha}$ for some $\alpha > 0$. Theorem \ref{Sat:Haupt1} already yields that
$y\in C^1((0,T];C(\overline{\Omega};\mathbb R^N))$, since $W_p^{2\mu-2/p}(\Omega)$ is embedded into $C(\overline{\Omega})$ whenever $p>n+2$ and $\mu\in (\mu_0,1]$.

Therefore it remains to show that $y\in C((0,T];C^2(\overline{\Omega};\mathbb R^N))$. To this end, we write the equation for $y$ in terms of $u=y-{\ee}/N$ as $-A(t,x)\Delta u(t,x)=g(t,x)$, where $A(t,x)=A_0(u(t,x)+{\ee}/N)$ and
$$g(t,x)=\sum\limits_{j=1}^N \left[ \sum\limits_{l=1}^n \partial_{l} A_0(u(t,x)+{\ee}/N) \partial_{j}u_l(t,x)  \right] \partial_{j}u(t,x)-\partial_t u(t,x)+Mr(u(t,x)+{\ee}/N).$$
By Theorem \ref{Sat:Haupt1} and Sobolev's embedding, there exists $\alpha\in (0,1)$ such that $A\in C^\alpha((0,T]\times\overline{\Omega};\mathcal{B}(\mathds{E}))$ and $g\in C^\alpha((0,T]\times\overline{\Omega};\mathds{E})$. Note that for fixed $t_*\in (0,T)$ the matrix $A(t_*,x)$ is invertible for each $x\in\overline{\Omega}$. This yields the equation
$-\Delta u(t_*,x)=A(t_*,x)^{-1}g(t_*,x)$, complemented by the boundary condition $\partial_\nu u(t_*,x)=0$ for $x\in\partial\Omega$. From now on we assume that $\partial\Omega\in C^{2+\alpha}$. Then it follows from \cite[Theorem 6.31]{GilTru} that $u(t_*,\cdot)\in C^{2+\alpha}(\overline{\Omega};\EE)$ and that there exists a constant $C>0$, which does not depend on $t_*\in (0,T)$, such that the estimate
$$\|u(t_*,\cdot)\|_{C^{2+\alpha}(\overline{\Omega};\EE)}\le C\left(\|A(t_*,\cdot)^{-1}g(t_*,\cdot)\|_{C^{\alpha}(\overline{\Omega};\EE)}+\|u(t_*,\cdot)\|_{C^{\alpha}(\overline{\Omega};\EE)}\right)$$
is valid. Hence $u\in C((0,T);C^{2+\alpha}(\overline{\Omega},\mathds E))$ and we have proven the following result.
\begin{The}\label{thm:classical}
Let the conditions of Theorem \ref{Sat:Haupt1} be satisfied and assume that $\partial\Omega\in C^{2+\alpha}$ for some $\alpha >0$. Then the unique solution of \eqref{MS} is a classical solution.
\end{The}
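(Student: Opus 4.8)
The plan is to view \eqref{MS} at each fixed instant of time as a linear elliptic boundary value problem and to boost the spatial regularity of the already-constructed strong solution by Schauder theory. Recall that \emph{classical} here means $y\in C^1((0,T];C(\overline{\Omega};\mathbb R^N))\cap C((0,T];C^2(\overline{\Omega};\mathbb R^N))$. The time-derivative part is essentially free: Theorem~\ref{Sat:Haupt1}(c) gives $y\in C^1((0,T];W_p^{2-2/p}(\Omega;\mathbb R^N))$, and since $p>n+2$ the Sobolev embedding $W_p^{2-2/p}(\Omega)\hookrightarrow C(\overline{\Omega})$ yields $y\in C^1((0,T];C(\overline{\Omega};\mathbb R^N))$ at once. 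Hence the only real content is to upgrade $y(t,\cdot)\in W_p^2(\Omega)$ to $y(t,\cdot)\in C^2(\overline{\Omega})$, continuously in $t$.

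First I would rewrite the equation for $u=y-\ee/N$ in nondivergence form, keeping the leading second-order term on the left and moving everything else to the right: $-A(t,x)\Delta u(t,x)=g(t,x)$, where $A(t,x)=A_0(u(t,x)+\ee/N)$ and $g$ collects the first-order drift terms, the time derivative $\partial_t u$, and the reaction $Mr$. The next step is to verify that both the coefficient matrix $A$ and the inhomogeneity $g$ are Hölder continuous, say of class $C^\alpha((0,T]\times\overline{\Omega})$ for some $\alpha\in(0,1)$. This follows from the regularity asserted in Theorem~\ref{Sat:Haupt1}: the solution and the derivatives entering $g$ live in parabolic spaces that embed into Hölder spaces in $(t,x)$ once $p>n+2$, while $A_0$ is real analytic in $y$ by Lemma~\ref{Lem:Inverse}.

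I would then freeze time at an arbitrary $t_*\in(0,T)$. Because $A_0(y)$ is invertible on $\EE$ for $y$ near $\DD$, its spectrum lying in $\{\Re z>0\}$ by Lemma~\ref{lem:specA_0}, the matrix $A(t_*,x)$ is invertible for every $x\in\overline{\Omega}$, so the frozen problem becomes the genuine elliptic equation $-\Delta u(t_*,\cdot)=A(t_*,\cdot)^{-1}g(t_*,\cdot)$ with the Neumann condition $\partial_\nu u(t_*,\cdot)=0$. On the $C^{2+\alpha}$ domain the right-hand side lies in $C^\alpha(\overline{\Omega})$, so the global Schauder estimate for the Neumann problem (e.g.\ \cite{GilTru}) gives $u(t_*,\cdot)\in C^{2+\alpha}(\overline{\Omega};\EE)$ with a bound \emph{uniform in $t_*$}, the Schauder constant depending only on $\Omega$ and the uniformly bounded, uniformly elliptic coefficients.

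The last step is to pass from this pointwise-in-time estimate to continuity in time. The map $t\mapsto A(t,\cdot)^{-1}g(t,\cdot)$ is continuous into $C^\alpha(\overline{\Omega})$, and the solution operator of the Neumann problem is a bounded linear map $C^\alpha(\overline{\Omega})\to C^{2+\alpha}(\overline{\Omega})$; composing yields $u\in C((0,T];C^{2+\alpha}(\overline{\Omega};\EE))$, hence $y\in C((0,T];C^2(\overline{\Omega};\mathbb R^N))$, which finishes the proof. I expect the main obstacle to be the joint Hölder continuity of $A$ and especially of $g$: since $g$ contains $\partial_t u$ and products of first derivatives of $u$, one must track carefully how the maximal-$L_p$-regularity class of Theorem~\ref{Sat:Haupt1}(c) embeds into $C^\alpha((0,T]\times\overline{\Omega})$ in both variables simultaneously. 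Once this is secured, the elliptic bootstrap and the continuity argument are routine.
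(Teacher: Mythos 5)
Your proposal is correct and follows essentially the same route as the paper: rewrite the equation for $u=y-\ee/N$ as $-A(t,x)\Delta u = g(t,x)$, verify H\"older continuity of $A$ and $g$ via Theorem \ref{Sat:Haupt1} and Sobolev embedding, freeze time, invert $A(t_*,\cdot)$, and apply the global Schauder estimate \cite[Theorem 6.31]{GilTru} with a constant uniform in $t_*$ to conclude $u\in C((0,T);C^{2+\alpha}(\overline{\Omega};\EE))$. The only cosmetic difference is that you phrase the final continuity-in-time step via the bounded solution operator, whereas the paper relies on the $t_*$-uniform Schauder bound; these are equivalent.
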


\subsection{Positivity} Assuming the kinetic term $r$ to be positivity preserving, we show the nonnega\-tivity of solutions of \eqref{MS}, and the instantaneous strict positivity of components corresponding to nontrivial initial data. The argument heavily relies on the
structure of  the diffusion term $\text{Div}_x(A_0(y)[\nabla_x y]^\textsf{T})$.

We consider this structure in more detail. Since $A_0(y) = -A(y)P(y)M^{-1}$ with $A(y) =
(B(y)|_{\EE})^{-1}$ from Lemma \ref{Lem:Inverse} and $P(y) =
I - (\cdot|\textsf{e})y$, the $i$-th component of $\text{Div}_x(A_0(y)[\nabla_x y]^\textsf{T})$ is given by
$$-\sum_{\alpha=1}^n \sum_{j=1}^N  a_{ij}(y) (M_j^{-1} \partial_{x_\alpha}^2 y_j - \partial_{x_\alpha}
[(M^{-1}\partial_{x_\alpha} y|\textsf{e})y_j]) -
\partial_{x_\alpha}(a_{ij}(y)) [ M_j^{-1} \partial_{x_\alpha} y_j
- (M^{-1}\partial_{x_\alpha} y|\textsf{e})y_j],$$ where $a_{ii}(y)
= -a_i^0(y)$ and $a_{ij}(y) = y_i a_{ij}^1(y)$ for $j\neq i$. We
collect the summands with $j=i$ from the first term, which
results in $-M_i^{-1}a_{i}^0(y)\Delta y_i$. All the other
summands contain either $\partial_{x_\alpha} y_i$ or $y_i$ as a
factor. Thus, as long as it exists, a component $y_i$ of a
solution of \eqref{MS} satisfies an equation of the form
\begin{equation}\label{eq:669}
\rho\partial_t y_i - M_i^{-1}a_i^0(y)\Delta_x y_i +
\sum_{\alpha=1}^n b_{i\alpha}^0(t,x)\partial_{x_\alpha} y_i +
c_i^0(t,x) y_i = M_ir_i(y),
\end{equation}
with coefficients $b_{i\alpha}^0, c_i^0$ depending on the partial derivatives up to
second order of $y$. We further write the $i$-th reaction
term $M_i r_i$ as
\begin{equation}\label{eq:671}
 M_i r_i(y) = -y_iL_i + h_i(y),
\end{equation}
where $L_i >0$ is the Lipschitz constant of $M_i r_i$ on $\DD$ and, with $\hat y = (y_1,..., y_{i-1}, 0, y_{i+1}, ... y_N)$,
$$h_i(y) = M_i r_i(\hat y) + L_iy_i + M_i(r_i(y) - r_i(\hat y)) \geq 0, \qquad y\in \DD.$$
Here $r_i(\hat y) \geq 0$ follows from the assumption that $r$ is positivity preserving.

Combining \eqref{eq:669} and \eqref{eq:671}, we arrive at
\begin{equation}\label{eq:700}
\rho\partial_t y_i - M_i^{-1}a_i^0(y)\Delta_x y_i +
\sum_{\alpha=1}^n b_{i\alpha}^0(t,x)\partial_{x_\alpha} y_i +
(c_i^0(t,x)+L_i) y_i \geq 0,
\end{equation}
Since $a_i^0(y) > 0$ for $y_i = 0$, the left-hand side of \eqref{eq:700} is \emph{parabolic} for $y_i$ close to zero, and the lower order coefficients $b_{i\alpha}^0, c_i^0$ are continuous if $y$ is a classical solution. This puts us into a position to apply \emph{maximum principles} and \emph{Hopf's lemma}, which is the key to the following result on nonnegativity and strict positivity. Recall that $U\subset \mathbb R^N$ denotes the neighborhood of $\mathds{D}$ from Lemma \ref{Lem:Inverse}.

\begin{The} \label{positivity} Assume $r\in C^1(U,\mathbb R^N)$ is mass and positivity preserving on $\DD$. Let for $p > n+2$ the initial data $y_0\in W_p^{2-2/p}(\Omega; \mathbb R^N)$ with $y_0(\overline{\Omega})\subset \V$ and $\partial_\nu y_0 = 0$ be given. Denote by $y$ the corresponding unique classical solution of \eqref{MS}. Then the following holds true.
\begin{itemize}
 \item[\emph{a)}] If $y_0 \geq 0$, then $y(t,x)\geq 0$ for all $t\in (0,t^+(y_0))$ and $x\in \overline{\Omega}$
 \item[\emph{b)}] If $y_0 \geq 0$ and $y_0^i \neq 0$, then $y_i(t,x) > 0$ for all $t\in (0,t^+(y_0))$ and $x\in \overline{\Omega}$.
\end{itemize}
\end{The}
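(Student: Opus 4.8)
The plan is to exploit the scalar differential inequality \eqref{eq:700} satisfied by each component $y_i$ wherever $y_i$ is close to zero, together with the parabolic maximum principle and Hopf's lemma. Throughout, recall that the solution stays in $\V\subset \ee/N+\EE$ by Theorem \ref{Sat:Haupt1}, so $\sum_k y_k\equiv 1$; hence once nonnegativity is known, every component lies in $[0,1]$ and $y(t,\cdot)\in\DD$. Since $a_i^0(y)>0$ for $y_i=0$ and $a_i^0$ is continuous, there is $\delta_0>0$ such that $-M_i^{-1}a_i^0(y)\Delta$ is uniformly elliptic with $a_i^0\ge c_0>0$ as long as $y_i\le\delta_0$; on $[\e,T]\times\overline\Omega$ the lower-order coefficients $b_{i\alpha}^0,c_i^0$ from \eqref{eq:669} are continuous and bounded because $y$ is a classical solution. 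Via an integrating factor (equivalently the substitution $w=e^{-\lambda t}y_i$ with $\lambda$ large, admissible since these coefficients are bounded) I may assume the zeroth-order coefficient $c_i^0+L_i$ in \eqref{eq:700} is nonnegative, which is the standard preparation for a minimum principle.

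For part (a) I would argue by a first-touching localization. Suppose some component becomes negative and set $t_0=\inf\{t>0:\ \min_{i,x}y_i(t,x)<0\}$. By continuity $\min_{i,x}y_i(t_0,\cdot)=0$, and immediately after $t_0$ the negative values are small, so at the spatial minima of the offending component the inequality \eqref{eq:700} is available (there $y_i$ is close to zero, hence parabolic). At an interior spatial minimum the gradient term vanishes and $-M_i^{-1}a_i^0\Delta y_i\le 0$, while a boundary minimum is excluded by the Neumann condition $\partial_\nu y_i=0$ via Hopf's lemma. Feeding this into \eqref{eq:700} and integrating the resulting differential inequality $\rho\dot m+(c_i^0+L_i)m\ge 0$ for $m(t)=\min_x y_i(t,x)$ shows that the minimum cannot decrease below its value $0$ at $t_0$. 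This contradicts negativity and yields $y\ge 0$, hence $y(t,\cdot)\in\DD$ for all $t\in(0,t^+(y_0))$.

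For part (b), fix $i$ with $y_0^i\not\equiv 0$. By (a) we have $y_i\ge 0$, so $y_i$ is a genuine supersolution of the uniformly parabolic operator in \eqref{eq:700} in any region where it is small. Suppose $y_i(t_1,x_1)=0$ for some $t_1>0$; since $y_i\ge0$ is continuous and vanishes at $(t_1,x_1)$, it is close to zero on a backward parabolic neighbourhood, so \eqref{eq:700} holds there. If $x_1\in\Omega$, the strong maximum principle forces $y_i\equiv 0$ on a backward cylinder; if $x_1\in\partial\Omega$, Hopf's lemma would give $\partial_\nu y_i(t_1,x_1)<0$, contradicting $\partial_\nu y_i=0$, so again the zero propagates into the interior. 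A connectedness argument in $x$ (using that $\Omega$ is connected) together with an open--closed argument in time then propagates $y_i\equiv 0$ backward to $t=0$---at each stage the relevant points carry $y_i=0$ and are therefore in the region where \eqref{eq:700} is valid---forcing $y_0^i\equiv 0$, a contradiction. Hence $y_i>0$ for all $t\in(0,t^+(y_0))$ and $x\in\overline\Omega$.

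The main obstacle, present in both parts, is that the supersolution inequality \eqref{eq:700} holds only where $y_i$ is small, so no global maximum principle applies directly; the first-touching localization in (a) and the backward propagation of the zero set in (b) are precisely the devices that confine all points under consideration to the region $\{y_i\le\delta_0\}$, where the equation is parabolic with continuous coefficients. A secondary technical point is the behaviour as $t\to 0^+$, where the second-order derivatives of $y$ entering $b_{i\alpha}^0,c_i^0$ need not be bounded for $y_0\in W_p^{2-2/p}$; I would handle this by running the maximum principle on $[\e,T]$ and letting $\e\to 0^+$, using the continuity of $y$ up to $t=0$ together with $y(0)=y_0\ge 0$.
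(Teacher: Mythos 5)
Your part (b) is essentially the paper's own argument (strong maximum principle plus Hopf's lemma plus an open--closed connectedness argument on the set where $y_i$ vanishes, noting that the points under consideration always satisfy $y_i=0$ so that \eqref{eq:700} is parabolic nearby); that part is fine once (a) is in place. The genuine gap is in part (a). The supersolution inequality \eqref{eq:700} rests on the lower bound $h_i(y)\ge 0$, and this is established \emph{only for} $y\in\DD$: one needs $r_i(\hat y)\ge 0$, which positivity preservation supplies only when the remaining components are nonnegative, and one needs $L_iy_i+M_i\bigl(r_i(y)-r_i(\hat y)\bigr)\ge L_iy_i-L_i|y_i|= 0$, which fails as soon as $y_i<0$ (there $h_i(y)$ is only bounded below by $M_ir_i(\hat y)+2L_iy_i$, with $r_i(\hat y)$ of uncontrolled sign since $\hat y\notin\DD$). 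Your first-touching scheme invokes \eqref{eq:700} precisely at times just after $t_0$, at spatial minima where $y_i<0$ and possibly several components are negative; the inequality is not available there, so the differential inequality $\rho\dot m+(c_i^0+L_i)m\ge 0$ for $m(t)=\min_x y_i(t,x)$ is unjustified. Closeness of $y_i$ to zero only buys parabolicity ($a_i^0>0$), not the sign of the right-hand side; assuming that sign is circular, since it presupposes the nonnegativity being proved. A repair along your lines would require rederiving the reaction bound off $\DD$ with errors controlled by the negative parts of \emph{all} components and running a coupled Gronwall estimate, which you do not carry out.

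The paper avoids this entirely by an $\varepsilon$-regularization: it solves the modified problem with reaction $Mr^\varepsilon=Mr+\varepsilon(\textsf{e}-Ny^\varepsilon)$ and initial data $y_0^\varepsilon=y_0+\varepsilon(\textsf{e}-Ny_0)$. Up to the first time any component of $y^\varepsilon$ touches zero, all components are nonnegative, so positivity preservation of $r$ applies at the touching point and $(Mr^\varepsilon)_i\ge\varepsilon>0$ there. This strict inequality is what rules out the degenerate touching scenario in which $\partial_t y_i$, $\nabla_x y_i$, $\Delta_x y_i$ and the reaction all vanish simultaneously and no contradiction arises from \eqref{eq:669} alone; Hopf's lemma excludes touching at the boundary. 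Letting $\varepsilon\to 0$ and using continuous dependence from Theorem \ref{The:lokaleExistenz} then yields $y\ge 0$. You should adopt this regularization (or an equally careful negative-part estimate) to close part (a).
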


\begin{proof} \emph{Step 1.} We prove Part a) (see also \cite[Section 6]{Bothe}). Let $y_0\geq 0$. For $\varepsilon > 0$ we consider the modified system
\begin{equation}\label{eq:667}
 \rho\partial_t y^\e + \text{Div}_x (A_0(y^\e) [\nabla_x y^\e]^{\textsf{T}}) = M r^\e, \qquad \partial_\nu y^\e =0, \qquad y^\e(0) = y_0^\e,
\end{equation}
with reaction terms $Mr^\e = Mr + \varepsilon (\textsf{e} -Ny^\e)$ and initial data  $y_0^\e = y_0 + \varepsilon(\textsf{e}-Ny_0)$. Observe that $(\textsf{e}|Mr^\e) = 0$ for $(\textsf{e}|y^\e)=1$, that $(\textsf{e}|y_0^\e) = 1$ and that $y_0^\e$ has strictly positive components for all sufficiently small $\e$. Thus \eqref{eq:667} has a unique maximal classical solution $y^\e$ by Theorem \ref{Sat:Haupt1}.

Fixing $\e> 0$, we claim that $y^\e(t,x) > 0$ for all $t\in
[0,t^+(y_0^\e))$ and $x\in \overline{\Omega}$. Assume the
contrary, i.e., there are $t_0\in (0,t^+(y_0^\e))$ and $x_0 \in
\overline{\Omega}$ such that $y_i^\e(t_0,x_0) = 0$ for a
component $y_i$ and $y_j^\e(t,\cdot)> 0$ on $\overline{\Omega}$
for all $j=1,...,N$ and $t\in [0,t_0)$. Note that necessarily
$t_0> 0$ since $y_0^\e > 0$. First suppose that $x_0\in \Omega$.
Then $\partial_t y_i^\e(t_0,x_0) \leq 0$, $\nabla_x
y_i^\e(t_0,x_0) = 0$ and $\Delta_x y_i^\e(t_0,x_0) \geq 0$.
Further, $(Mr^\e)_i =M_i r_i + \e(1-Ny_i^\e) \geq \e$
 at $(t_0,x_0)$ since $r_i \geq 0$ for $y_i=0$.  Therefore  \eqref{eq:669}
 yields
$$\rho\partial_t y_i^\e(t_0,x_0) - M_i^{-1} a_i^0(y^\e(t_0,x_0)) \Delta_x y_i^\e(t_0,x_0)  \geq \e,$$
a contradiction. Suppose next that $x_0\in \partial \Omega$. Then
$\partial_\nu y_i(t_0,x_0) \leq 0$. On the other hand,
\eqref{eq:700}  implies that there is $\eta > 0$ such that
$y_i^\e$ is a supersolution of a linear parabolic equation in
$(t_0-\eta,t_0]\times V$, where $V\subset \Omega$ is a
sufficiently small open ball with $x_0\in \partial V$. The
previous considerations show that $y_i^\e> 0$ in
$(t_0-\eta,t_0]\times V$. Hence $\partial_\nu y_i(t_0,x_0) > 0$
by Hopf's lemma, see Theorem 3.7 (and the remark thereafter) of
\cite{Protter}, which again leads to a contradiction. We conclude
that $y^\e > 0$ on $(0,t^+(y_0^\e))\times \overline{\Omega}$.

Given $T \in (0,t^+(y_0))$, we obtain $y^\e\to y$ as $\e\to 0$ in
$C([0,T];W_p^{2-2/p}(\Omega,\mathbb R^N))$ from Theorem \ref{The:lokaleExistenz}, and thus uniformly
on $[0,T] \times \overline{\Omega}$. Hence $y\geq 0$ on
$[0,t^+(y_0))\times \overline{\Omega}$.

\emph{Step 2.} We prove Part b) and assume additionally that  $y_0^i\neq 0$. From Step 1 we know  $y_i\geq 0$. For $t\in (0,t^+(y_0))$ we set
$$\Omega_t^+ = \{x\in \Omega\,:\, y_i(t,x) > 0\}.$$
We are going to show that $\Omega_t^+$ is nonempty, open and closed in $\Omega$. Clearly, $\Omega_t^+$ is open in $\Omega$. To obtain $\Omega_t^+ \neq \emptyset$, let $t_0$ be the smallest time such that $\Omega_{t_0}^+ =\emptyset$, \ie, $y_i(t_0,\cdot) = 0$ on $\Omega$. Note that $t_0 > 0$ by the assumption $y_0^i \neq 0$. Then the left-hand side of  \eqref{eq:700} is parabolic in $(t_0-\eta,t_0]\times \Omega$ for small $\eta$.  Since $y_i$ attains its minimum zero everywhere on $\{t_0\}\times \Omega$, the strong maximum principle yields $y_i(t_0-\eta,\cdot) = 0$, see again \cite[Theorem 3.7]{Protter}. But this is a contradiction to the definition of $t_0$, and therefore $\Omega_t^+\neq \emptyset$ for all $t$.

We finally show that $\Omega_t^+$ is closed in $\Omega$. Let $x_k\in \Omega_t^+$ be a sequence such that $x_k\to x_0\in \Omega$ as $k\to \infty$. Assume $x_0\notin \Omega_t^+$, \ie, $y_i(t,x_0) = 0$. Then there are $\eta > 0$ and a convex open set $V\subset \Omega$ containing $x_0$ such that \eqref{eq:700} is parabolic on $(t-\eta,t]\times V$. As above, by the strong maximum priniciple, $y_i(t,\cdot) = 0$ on $V$. Hence $y_i(t,x_k) = 0$ for all sufficiently large $k$, which contradicts the assumption $x_k\in \Omega_t^+$.

We conclude that $\Omega_t^+ = \Omega$ for each $t$, and therefore $y_i > 0$ on $(0,t^+(y_0))\times \Omega$. Arguing as in the previous step by contradiction and Hopf's lemma, we get  $y_i > 0$ on $(0,t^+(y_0))\times \overline{\Omega}$.
\end{proof}

%%%%%%%%%%%%%%%%%%%%%%%%%%%%%%%%%%%%%%%%%%%%%%%%%%%%%%%%%%%%%%%%%%%%%%%%%%%%%%%%%%%%%%%%%%%%%%%%%%%%%%%%%%%%%%%%%%%%%%%%%%%%%%%%%%%%%%%%%%%%%%%%

\section{Stability of equilibria and long-time behavior} \label{sec:stability}
For a class of reversible mass-action kinetics $r$ modeling \eqref{eq:elem-reac} we show that \eqref{MS} only has spatially homogeneous kinetic equilibria in $\mathring \DD$, that any of these equilibria is stable and that solutions starting sufficiently close to an equilibrium converge exponentially as $t\to \infty$. 
\subsection{Reversible mass-action kinetics} 
The reversible mass-action kinetic $r$ modeling \eqref{eq:elem-reac} is given by (see also \cite[Section 6.4]{Gio})
\begin{equation}\label{eq:888}
 r(y) = \bnu \mathbf{r}(\rho M^{-1}y) = \sum_{l=1}^m \bnu_l \mathbf r_l(\rho M^{-1}y),
\end{equation}
where $\bnu = [\bnu_{jl}^+-\bnu_{jl}^-] \in \mathbb Z^{N\times m}$ is for $\bnu_{jl}^+, \bnu_{jl}^-\in \mathbb N_0$ the stoechiometric matrix of \eqref{eq:elem-reac} and $\bnu_l\in \mathbb Z^N$ denotes the $l$-th column of $\bnu$. The vector $\mathbf r = (\mathbf r_1, \ldots, \mathbf r_m)$ of elementary reactions is in terms of the concentrations $c$ given by
$$\mathbf{r}_l(c) = -k_l^+ c^{\bnu_l^+} + k_l^-c^{\bnu_l^-}, \qquad l=1,...,m,$$
where $\bnu_l^+ = [\bnu_{jl}^+]$ and $\bnu_l^- = [\bnu_{jl}^-]$, such that $\bnu_l = \bnu_l^+ - \bnu_l^-$ for a column of $\bnu$. Here we use multiindex notation, i.e., $c^{\bnu_l^+} = c_1^{\bnu_{1l}^+}\cdot\ldots \cdot c_N^{\bnu_{Nl}^+}$, and analogous for $c^{\bnu_l^-}$. The set of positive chemical kinetic equilibria, i.e., where all elementary reactions $\mathbf r_l$ vanish, is given by $$\mathcal E = \{y_*\in \mathring {\DD}\,:\, \mathbf r(\rho M^{-1} y_*) = 0\}.$$
The \emph{stoechiometric subspace} $\mathbb S$ of $\mathbb R^N$ is defined by
$$\mathbb S = \R(\bnu), \qquad s = \dim \mathbb S.$$
We assume that the columns $\bnu_l$ of $\bnu$ are ordered such that $\bnu_1,\ldots,\bnu_s$ are linearly independent, i.e.,  
$$\mathbb S = \text{span}\{\bnu_1,\ldots,\bnu_s\}.$$
Throughout we make the following assumptions: 
\begin{equation}
 \mathcal E \neq \emptyset, \qquad M\textsf{e}\in \mathbb S^{\perp},  \qquad k_l^+, k_l^- > 0.\tag{\textbf{R}}
\end{equation}
Observe that the second condition implies $(\textsf{e}|Mr(y)) = 0$ for each $y$, i.e., conservation of mass. It also implies that $s < N$. The strict positivity of $k_l^+, k_l^-$ means that each elementary reaction $\mathbf{r}_l$ is reversible. It is straight forward to check that mass-action kinetics $r$ as above are positivity preserving, see \cite[Lemma 6.4.3]{Gio}. Concerning the first condition, the existence of a chemical equilibrium can be characterized as follows. Observe that $y_*\in \mathcal E$ if and only if for $c_* = \rho M^{-1} y_*$ we have
\begin{equation} \label{eq:chem-eq}
c_*^{\bnu_l} = \frac{c_*^{\bnu_l^+}}{c_*^{\bnu_l^-}} =
\frac{k_l^-}{k_l^+} =: K_l, \qquad l=1,\ldots,m.
\end{equation}
Since $k_l^+, k_l^- > 0$, \eqref{eq:chem-eq} is equivalent to 
\begin{equation}\label{eq:chem-eq-eq}
 (\bnu_l|\log c_*) = \log K_l, \qquad l=1,...,m,
\end{equation}
where we write $\log \xi = (\log \xi_1, \ldots, \log \xi_N)^\textsf{T}$ for a vector $\xi\in \mathbb R^N$. By the linear independence of the $\bnu_l$, for $l=1,..., s$ the equations in \eqref{eq:chem-eq-eq} can always be satisfied. For the remaining equations we note that there are $\alpha_{lk}\in \mathbb R$ such that $\bnu_l = \sum_{k=1}^s \alpha_{lk} \bnu_k$ for $l=s+1,\ldots,m.$ Thus $\mathcal E\neq\emptyset $ if and only if 
$$\prod_{k=1}^s K_k^{\alpha_{lk}} = K_l,\qquad l=s+1,...,m.$$
In particular,  $\mathcal E\neq\emptyset $ is always satisfied in case $s > m$. On the other hand, if $\mathcal E \neq \emptyset$, then the equations in \eqref{eq:chem-eq-eq} for $l=s+1,...,m$ become redundant. 

In the sequel we fix an arbitrary positive chemical equilibrium $\mathbf y_* \in \mathcal E$ and consider the chemical potentials
$$\mu_k(y) = \frac{1}{M_k} \log (y_k/\mathbf y_*^k), \qquad y\in \mathring {\DD}, \qquad k=1,\ldots,N.$$
We will also write 
$$\mu= (\mu_1,\ldots, \mu_N)^{\textsf{T}} = M^{-1} \log(y/\mathbf y_*).$$

\begin{Lem}\label{lem:manifold} Assume \emph{(\textbf{R})}. Then the following holds true. 
\begin{itemize}
 \item[a)] We have $(\mu(y)|Mr(y))\leq 0$ for all $y\in \mathring \DD$, and equality holds if and only if $y\in \mathcal E$.
  \item[b)] For $y\in \mathring \DD$ we have $r(y) = 0$ if and only if $y\in \mathcal E$, i.e., all kinetic equilbria are chemical.
   \item[c)] The set $\mathcal E$ forms an $(N-s-1)$-dimensional smooth submanifold of $\mathbb R^N$. At $y_*\in \mathcal E$, the tangent space is given by $T_{y_*} \mathcal E = \N(\bnu^{\mathsf T}Y_*^{-1})\cap \EE$.
\end{itemize}
\end{Lem}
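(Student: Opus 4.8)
The plan is to prove the three parts in order, using the explicit form of the mass-action kinetics and the characterization of $\mathcal E$ via $\log c_*$. Let me first record the key algebraic identity underlying everything. Writing $c = \rho M^{-1}y$ and $c_* = \rho M^{-1}\mathbf y_*$, we have $M\mu(y) = \log(y/\mathbf y_*) = \log(c/c_*)$, so that
\begin{equation*}
(\mu(y)|Mr(y)) = (M\mu(y)|r(y)) = \Big(\log(c/c_*)\,\Big|\,\sum_{l=1}^m \bnu_l \mathbf r_l(c)\Big) = \sum_{l=1}^m (\bnu_l|\log(c/c_*))\,\mathbf r_l(c).
\end{equation*}

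\emph{Part a).} The idea is to show each summand is $\le 0$. Using the equilibrium relation \eqref{eq:chem-eq}, namely $k_l^+ c_*^{\bnu_l^+} = k_l^- c_*^{\bnu_l^-} =: w_l > 0$, I would rewrite the elementary reaction rate as
\begin{equation*}
\mathbf r_l(c) = -k_l^+ c^{\bnu_l^+} + k_l^- c^{\bnu_l^-} = w_l\Big( (c/c_*)^{\bnu_l^-} - (c/c_*)^{\bnu_l^+}\Big).
\end{equation*}
Setting $\xi = \log(c/c_*)$, one computes $(c/c_*)^{\bnu_l^{\pm}} = e^{(\bnu_l^{\pm}|\xi)}$, and the $l$-th summand becomes $w_l\,(\bnu_l|\xi)\big(e^{(\bnu_l^-|\xi)} - e^{(\bnu_l^+|\xi)}\big)$. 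Since $(\bnu_l|\xi) = (\bnu_l^+|\xi) - (\bnu_l^-|\xi)$, the sign of $(\bnu_l|\xi)$ is opposite to the sign of $e^{(\bnu_l^-|\xi)} - e^{(\bnu_l^+|\xi)}$ because the exponential is increasing; hence each term is $\le 0$, giving $(\mu(y)|Mr(y))\le 0$. Equality forces every summand to vanish, which happens iff $(\bnu_l|\xi) = 0$ for all $l$, i.e.\ $(\bnu_l|\log c) = (\bnu_l|\log c_*) = \log K_l$ for all $l$; by \eqref{eq:chem-eq-eq} this is exactly $y\in \mathcal E$.

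\emph{Part b).} One direction is immediate: $y\in\mathcal E$ means all $\mathbf r_l(\rho M^{-1}y)=0$, hence $r(y) = \bnu\mathbf r = 0$. For the converse, if $r(y)=0$ then $(\mu(y)|Mr(y)) = 0$, so Part a) gives $y\in \mathcal E$.

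\emph{Part c).} The plan is to exhibit $\mathcal E$ as a regular level set. From \eqref{eq:chem-eq-eq} (with the redundant equations $l > s$ dropped, as noted in the text), $y_* \in \mathcal E$ iff $(\bnu_l|\log c_*) = \log K_l$ for $l=1,\dots,s$ together with the constraint $(\textsf{e}|y_*)=1$ and positivity. Consider the map $G(y) = \big(\bnu_l|\log(\rho M^{-1}y)\big)_{l=1}^s$ on $\mathring\DD$; its differential at $y_*$ in a direction $h\in\EE$ is $\big((\bnu_l|Y_*^{-1}h)\big)_l = \bnu^{\mathsf T}Y_*^{-1}h$ restricted to the first $s$ rows. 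Because $\bnu_1,\dots,\bnu_s$ are linearly independent and $Y_*^{-1}$ is a positive diagonal isomorphism, this differential has full rank $s$ on $\EE$, so $\mathcal E$ is a smooth submanifold of the $(N-1)$-dimensional affine space $\DD$ of dimension $(N-1)-s = N-s-1$. The tangent space is the kernel of the differential, namely $T_{y_*}\mathcal E = \N(\bnu^{\mathsf T}Y_*^{-1})\cap \EE$. The main thing to verify carefully is that assumption \emph{(\textbf{R})} guarantees this kernel has the right dimension and that dropping the redundant equations $l>s$ does not change the level set, which is precisely the content of the computation preceding the lemma. The hard part is Part a): getting the sign right requires choosing the correct substitution $\xi = \log(c/c_*)$ and recognizing the $(\bnu_l|\xi)(e^{(\bnu_l^-|\xi)}-e^{(\bnu_l^+|\xi)})$ structure as a product of a number and the negative of something with the same sign.
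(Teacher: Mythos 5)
Parts a) and b) of your proposal are correct and essentially identical to the paper's argument: your factorization $\mathbf r_l(c) = w_l\bigl((c/c_*)^{\bnu_l^-}-(c/c_*)^{\bnu_l^+}\bigr)$ with $w_l = k_l^+c_*^{\bnu_l^+}=k_l^-c_*^{\bnu_l^-}$ is the same computation the paper performs, and your sign argument for $(\bnu_l|\xi)\bigl(e^{(\bnu_l^-|\xi)}-e^{(\bnu_l^+|\xi)}\bigr)\le 0$ is just the inequality $(\log\eta)(\eta-1)\ge0$ written in additive variables; the equality discussion is also right, since $w_l>0$ forces every summand to vanish exactly when $(\bnu_l|\xi)=0$ for all $l$.

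In Part c) there is a genuine gap. You assert that the differential $h\mapsto\bigl((\bnu_l|Y_*^{-1}h)\bigr)_{l=1}^s$ has full rank $s$ \emph{on} $\EE$ ``because $\bnu_1,\dots,\bnu_s$ are linearly independent and $Y_*^{-1}$ is a positive diagonal isomorphism.'' Those two facts only give full rank on all of $\mathbb R^N$; restricting to the hyperplane $\EE$ can drop the rank by one. Concretely, writing $\hat\bnu=(\bnu_1,\dots,\bnu_s)$, the restriction of $\hat\bnu^{\mathsf T}Y_*^{-1}$ to $\EE$ fails to be surjective precisely when some nontrivial combination $\sum_{l=1}^s\lambda_lY_*^{-1}\bnu_l$ lies in $\EE^{\perp}=\operatorname{span}\{\textsf{e}\}$, i.e.\ when $\textsf{e}=\sum_l\lambda_lY_*^{-1}\bnu_l$, and nothing in linear independence or invertibility of $Y_*^{-1}$ excludes this. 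This is exactly where the hypothesis $M\textsf{e}\in\mathbb S^{\perp}$ from (\textbf{R}) must enter: pairing the putative identity with $MY_*\textsf{e}$ yields $0<(MY_*\textsf{e}|\textsf{e})=\sum_l\lambda_l(\bnu_l|M\textsf{e})=0$, a contradiction. You do remark at the end that (\textbf{R}) should ``guarantee this kernel has the right dimension,'' but that is precisely the step that has to be proved, and the justification you actually supply for full rank does not contain it. A minor additional point: to get the tangent space in the stated form you should note that $\N(\hat\bnu^{\mathsf T}Y_*^{-1})\cap\EE=\N(\bnu^{\mathsf T}Y_*^{-1})\cap\EE$ because $\bnu_{s+1},\dots,\bnu_m$ are linear combinations of $\bnu_1,\dots,\bnu_s$.
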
 
\begin{proof} \emph{Step 1.} Writing $c = \rho M^{-1} y$ for $y\in \mathring \DD$,  and $\mathbf c_* = \rho M^{-1} \mathbf y_*$, we have $\mu(y) = M^{-1} \log(c/\mathbf c_*)$. Using \eqref{eq:chem-eq}, we calculate
\begin{align}
(\mu(y)|Mr(y)) & =  \sum\limits_{l=1}^m \left( \log(c/\mathbf c_*) \middle| \bnu_l \right) \left(-k_l^+ c^{\bnu_l^+} + k_l^-  c^{\bnu_l^-} \right) \notag \\
&= -\sum\limits_{l=1}^m  \log[(c/\mathbf c_*)^{\bnu_l}] k_l^+
c^{\bnu_l^-}\mathbf c_*^{\bnu_l} \left( \frac{c^{\bnu_l}}{\mathbf c^{\bnu_l}_*} - 1 \label{eq:m691}
\right).
\end{align}
Since $k_l^+c^{\bnu_l^-}\mathbf c_*^{\bnu}> 0$ and $(\log \xi)(\xi-1)\geq 0$ for all $\xi >0$, we obtain  $(\mu(y)| M r(y)) \leq 0$. Since each summand in \eqref{eq:m691} is nonpositive, we have $(\mu(y)| M r(y)) = 0$ if and only $c^{\bnu_l}  =\mathbf c_*^{\bnu_l} = K_l$ for each $l$, also using \eqref{eq:chem-eq}.  This implies $y\in \mathcal E$ and proves a). Assertion b) is a direct consequence of a).

\emph{Step 2.} We prove c). Above we have seen that that the assumption $\mathcal E\neq \emptyset$ implies that $y_*\in \mathcal E$ if and only if $(\bnu_l|\log c_*) = \log K_l$ for $l=1,\ldots s$. Hence $\mathbf r(\rho M^{-1} y_*) = 0$ is equivalent to $\mathbf r_1(\rho M^{-1}y_*),\ldots, \mathbf r_s(\rho M^{-1} y_*) = 0$.

Define the map
$$F(y) = [ \mathbf r_1(\rho M^{-1} y),\ldots,  \mathbf r_s(\rho M^{-1} y), (y|\textsf{e}) - 1]^{\textsf{T}},\qquad y\in \mathring \DD,$$
such  that $F(y_*) = 0$ if and only if $y_*\in \mathcal E$. We show that $F'(y_*)$ has full rank $s+1$ at each $y_*\in \mathcal E$. To this end we calculate, writing  $c_* = \rho M^{-1} y_*$ and using \eqref{eq:chem-eq},
$$\partial_{y_j} \mathbf r_l(\rho M^{-1} y_*) =\rho M_j^{-1}(-k_l^+ \bnu_{jl}^+c_*^{\bnu_l^+} + k_l^-
\bnu_{jl}^-c_*^{\bnu_l^-})/c_*^j= -k_l^- c_*^{\bnu_{l}^-} \bnu_{jl} /y_*^j.$$
Therefore, writing $\hat \bnu = ( \bnu_1,\ldots, \bnu_s)\in \mathbb Z^{N\times s}$,
$$F'(y_*) = \Matrix{- \text{diag}(k_l^-c_*^{\bnu_{l}^-}) \hat {\bnu}^{\textsf{T}} Y_*^{-1} \\ \textsf{e}^{\textsf{T}}}.$$ 
Since $k_l^-c_*^{\bnu_{l}^-}> 0$ and $\hat{\bnu}^{\textsf{T}}$ is surjective, the matrix $-\text{diag}(k_l^-c_*^{\bnu_{l}^-}) \hat {\bnu}^{\textsf{T}} Y_*^{-1} $ has full rank $s$. We further claim that $\textsf{e}$ is linearly independent of the rows  of $-\text{diag}(k_l^-c_*^{\bnu_{l}^-}) \hat {\bnu}^{\textsf{T}} Y_*^{-1} $. If this were not the case, we find numbers $\lambda_l$ such that $\textsf{e} = \sum_{l=1}^s \lambda_l Y_*^{-1} \bnu_l$. Taking the scalar product with $MY_* \textsf{e}$, we obtain from $M\textsf{e} \in \mathbb S^{\perp}$ that 
$$0\neq( MY_* \textsf{e}|\textsf{e}) = \sum_{l=1}^s \lambda_l  (\bnu_l|M\textsf{e}) = 0,$$
a contradiction. Hence $F'(y_*)$ has full rank $s+1$, which implies that $\mathcal E$ is a smooth manifold of dimension $N-s-1$. The tangent space is given by 
$$\N(F'(y_*)) = \N(\hat {\bnu}^{\textsf{T}}Y_*^{-1})\cap \EE = \N(\bnu^{\textsf{T}}Y_*^{-1})\cap \EE.$$
\end{proof}

\subsection{Stability of equilibria} For reversible mass-action kinetics as in \eqref{eq:888}, in this subsection we show that all equilibria of \eqref{MS} are stable spatially homogeneous kinetic equilibria. Throughout we fix $p > n+2$ and set
$$\mathcal X = \{y_0 \in W_p^{2-2/p}(\Omega,\mathbb R^N)\;:\; y_0(\overline{\Omega})\subset \mathring \DD, \;\;\partial_{\nu} y_0 = 0\}.$$
Then Theorem \ref{Sat:Haupt1} yields that \eqref{MS} is locally well-posed on $\mathcal X$. The key quantity to identify the equlibria is the total free energy $\Psi$, which is given by
$$\Psi(y) = \int_\Omega \psi(y)\,dx,\qquad \psi(y) = \sum_{k=1}^N \frac{y_k}{M_k}(\log(y_k/ \mathbf y_*^k) - 1).$$
Observe that $\mu_k = \partial_{y_k} \psi$ for the chemical potentials.

\begin{Pro}\label{Lem:Konstanten} Assume \emph{(\textbf{R})}. Then the following holds true.
\begin{itemize}
\item[\text{a)}] The total free energy $\Psi$ is a strict Lyapunov function for \eqref{MS} on 
$\mathcal X$.
\item[\text{b)}] Each equilibrium of \eqref{MS} is spatially homogeneous, such that the set of equilibria of \eqref{MS} in $\mathcal X$ is given by (the constant functions in) $\mathcal E$.
\end{itemize}
\end{Pro}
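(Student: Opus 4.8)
The plan is to establish a dissipation identity for $\Psi$ along a classical solution and read off both claims from it; the decay gives the Lyapunov property in a), the equality case gives strictness and, applied to a stationary solution, yields b).

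First I note that for $y_0\in\mathring\DD$ the constraint $(y|\textsf{e})=1$ is preserved (sum the equations and use $\sum_k J_k=0$, $(Mr|\textsf{e})=0$) and strict positivity holds by Theorem \ref{positivity}, so $y(t,\cdot)\in\mathring\DD$ and $\mu=M^{-1}\log(y/\mathbf y_*)$, $\psi$ are smooth along the orbit. Using $\mu_k=\partial_{y_k}\psi$, differentiating under the integral, inserting $\rho\partial_t y_k=-\div_x J_k+M_kr_k$ with flux columns $J^\alpha=A(y)P(y)M^{-1}\partial_{x_\alpha}y$, and integrating by parts (the boundary term drops since $\partial_\nu y=0$ is equivalent to $(J_k|\nu)=0$), I obtain
\[
\rho\frac{d}{dt}\Psi(y(t))=\int_\Omega\Big[\sum_{\alpha=1}^n(\partial_{x_\alpha}\mu|J^\alpha)+(\mu|Mr(y))\Big]\,dx .
\]
The reaction integrand is handled directly by Lemma \ref{lem:manifold}\,a): $(\mu|Mr(y))\le 0$, with equality iff $y\in\mathcal E$.

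The heart of the proof is the diffusion integrand, and this is where I expect the main obstacle to lie. Here I use $\partial_{x_\alpha}\mu=M^{-1}Y^{-1}\partial_{x_\alpha}y$, equivalently $M^{-1}\partial_{x_\alpha}y=Y\partial_{x_\alpha}\mu$ (the diagonal matrices commute), so the Maxwell-Stefan relation $B(y)J^\alpha=P(y)M^{-1}\partial_{x_\alpha}y$ becomes $B(y)J^\alpha=P(y)Y\partial_{x_\alpha}\mu$. Since $\sum_k J_k=0$ we have $J^\alpha\in\EE$, hence $P(y)J^\alpha=J^\alpha$, and with the identity $Y^{-1}P(y)Y=P(y)^{\textsf T}$ (from $P(y)Y=YP(y)^{\textsf T}$ in the proof of Lemma \ref{Lem:4}) this gives $(\partial_{x_\alpha}\mu|J^\alpha)=(Y^{-1}B(y)J^\alpha|J^\alpha)$. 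Writing $Y^{-1}B(y)=Y^{-1/2}B_S(y)Y^{-1/2}$ with the symmetric, negative semidefinite symmetrizer $B_S(y)$ from Section \ref{sec:MS} and setting $w^\alpha=Y^{-1/2}J^\alpha$, this equals $(B_S(y)w^\alpha|w^\alpha)\le 0$.

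Combining both bounds proves $\frac{d}{dt}\Psi\le 0$, which is a). For strictness, equality forces both nonnegative-up-to-sign integrands to vanish a.e. The reaction part then yields $y(t,\cdot)\in\mathcal E$ pointwise, while $(B_S(y)w^\alpha|w^\alpha)=0$ and negative semidefiniteness give $w^\alpha\in\N(B_S(y))=\spa\{y^{1/2}\}$, so $J^\alpha\in\spa\{y\}$; but $J^\alpha\in\EE$ and $(y|\textsf{e})=1$ force $J^\alpha=0$. As $A_0(y)$ is invertible on $\EE$ (Lemma \ref{lem:specA_0}) and $\partial_{x_\alpha}y\in\EE$, $J=0$ implies $\nabla_x y=0$, so $y(t,\cdot)$ is a constant lying in $\mathcal E$, which is a stationary solution; thus $\frac{d}{dt}\Psi=0$ exactly at equilibria, and $\Psi$ is a strict Lyapunov function. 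For b) I apply the same identity to an equilibrium $y_*$: since $\partial_t y_*=0$ the left-hand side vanishes, so both integrands vanish a.e., giving $J_*=0$ (hence $y_*$ spatially constant, as above) and $y_*\in\mathcal E$. Conversely, any constant $y_*\in\mathcal E$ satisfies $\nabla_x y_*=0$ and $r(y_*)=0$ by Lemma \ref{lem:manifold}\,b), hence is an equilibrium with $\partial_\nu y_*=0$; so the equilibria in $\mathcal X$ are precisely the constants in $\mathcal E$.
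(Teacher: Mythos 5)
Your proposal is correct and follows essentially the same route as the paper: the same dissipation identity for $\Psi$, the reaction term controlled by Lemma \ref{lem:manifold}, and the diffusion term reduced via the symmetrization $B_S(y)$ to $(B_S(y)Y^{-1/2}J^\alpha\,|\,Y^{-1/2}J^\alpha)\le 0$, with the equality case forcing $J=0$ and hence spatial homogeneity. The only differences are cosmetic (you spell out the kernel argument $\N(B_S(y))=\spa\{y^{1/2}\}$ and the trivial converse inclusion in b), both of which the paper leaves implicit).
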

\begin{proof} \emph{Step 1.} Since $\mathcal X \hookrightarrow C(\overline{\Omega};\mathbb R^N)$, it is clear that $\Psi$ is continuous on $\mathcal X$.  For initial data $y_0\in \mathcal X$, the corresponding maximal solution $y$ of  \eqref{MS} is classical and has strictly positive components. We may thus differentiate $\psi(y)$ with respect to $t \in (0,t^+(y_0))$ and use \eqref{MS} to the result
\begin{equation*}
\rho \partial_t \psi(y) = \sum_{k=1}^N\mu_k \rho\partial_t y = -
\sum_{k=1}^N \text{div}_x (\mu_k J_k) + \sum_{k=1}^N (\nabla_x
\mu_k| J_k) + (\mu| M r(y)). 
\end{equation*} 
Here the first summand vanishes after integration over $\Omega$ due to the boundary conditions $(\nu|J_k)= 0.$ Therefore
\begin{equation}\label{m688}
\rho \partial_t \Psi(y) = \int_{\Omega} \sum_{k=1}^N (\nabla_x
\mu_k(y)| J_k)\, dx + \int_{\Omega} (\mu(y)| M r(y))\, dx.
\end{equation} 
We prove that the \emph{integrands} on the right-hand side in \eqref{m688} are negative. For the second integrand, this is a consequence of Lemma \ref{lem:manifold}. For the first integrand in \eqref{m688} we write
$$\sum_{k=1}^N \scalar{\nabla_x
\mu_k(y)}{J_k} = \sum_{\alpha=1}^n \scalar{\partial_{x_\alpha}
\mu(y)}{J^\alpha}.$$
For fixed $\alpha$ we calculate, using $P(y) J^\alpha = J^\alpha$,
$YP(y)^{\textsf{T}} = P(y) Y$ and that $P(y)Y \partial_{x_\alpha}
\mu  = B(y) J^\alpha$ by \eqref{MSLaw},
\begin{align*}
(\partial_{x_\alpha} \mu(y)|J^\alpha) &  = (P(y) Y
\partial_{x_\alpha} \mu(y)|Y^{-1} J^\alpha) = (B_S(y) Y^{-1/2}
J^\alpha|Y^{-1/2}J^\alpha).
\end{align*}
Here $B_S(y)$ is the symmetrization of $B(y)$ introduced in Section \ref{sec:MS}. Since $B_S(y)$ is negative definite on $Y^{-1/2} \EE$, it follows that $(\partial_{x_\alpha} \mu(y)|J^\alpha)\leq 0$, and that this term vanishes if and only if $J^\alpha = 0$. Hence $\Psi$ decreases along solutions of \eqref{MS}.

\emph{Step 2.} Assume $\Psi$ is not strictly decreasing along a solution $y$. Then there is $t_* \in (0,t^+(y_0))$ such that $\partial_t\Psi(y(t_*)) = 0$. Since both integrands in \eqref{m688} are nonnegative, we obtain that 
\begin{equation}\label{m692}
 \sum_{k=1}^N (\nabla_x
\mu_k(y(t_*))| J_k) = 0, \qquad   (\mu(y(t_*))| M r(y(t_*))) = 0.
\end{equation}
The first identity and the considerations in Step 1 show that $(\partial_{x_\alpha} \mu(y(t_*))|J^\alpha)= 0$, and therefore $J^\alpha = 0$ for each $\alpha = 1,\ldots,n$. Hence $\nabla_x y(t_*) = 0$ by \eqref{MSLaw} and $y(t_*)$ is spatially homogeneous. The second identity in \eqref{m692} and Lemma \ref{lem:manifold} imply $y(t_*)\in \mathcal E$. Thus $y$ is a kinetic equilibrium. This proves that $\Psi$ is a strict Lyapunov function for \eqref{MS}.

\emph{Step 3.} To show b), we note that for any equilibrium $y_*$ of \eqref{MS} we have $\partial_t \Psi(y_*) = 0$. Thus \eqref{m688} and the same arguments as in the previous step show that $y_*$ is  homogeneous and  $y_* \in \mathcal E$. 
\end{proof}

We prove stability with asymptotic phase for the equilibria of \eqref{MS}.

\begin{The}\label{Sat:Haupt2} Assume \emph{(\textbf{R})}. Then any equilibrium $y_*\in \mathcal E$ of \eqref{MS} is stable. Moreover, for each $y_*\in \mathcal E$ there is $\varepsilon > 0$ such that
if 
$$\|y_0 -y_*\|_{W_p^{2-2/p}(\Omega;\mathbb R^N)} \leq \e$$ 
for some $y_0\in \mathcal X$, then the solution of \eqref{MS} corresponding to $y_0$ exists globally in time and  converges at an exponential rate to some $y_\infty\in \mathcal E$, with respect to the $W_p^{2-2/p}(\Omega;\mathbb R^N)$-norm.
\end{The}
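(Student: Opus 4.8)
The plan is to apply the generalized principle of linearized stability for quasilinear parabolic problems on manifolds of equilibria, as developed in \cite{PruSimZac}. The set of equilibria $\mathcal E$ is, by Lemma \ref{lem:manifold}(c), an $(N-s-1)$-dimensional smooth manifold consisting of spatially homogeneous (constant) functions, and by Proposition \ref{Lem:Konstanten} these are precisely all equilibria of \eqref{MS} in $\mathcal X$. To invoke the abstract theorem one must verify three ingredients at a fixed $y_*\in \mathcal E$: first, that near $y_*$ the set of equilibria is a $C^1$-manifold of the asserted dimension whose tangent space agrees with the kernel of the full linearization $L$ of the right-hand side of \eqref{MS}; second, that $0$ is a semi-simple eigenvalue of $L$, i.e.\ $X_0 = \N(L)\oplus \R(L)$; and third, that the remaining spectrum of $L$ lies strictly in the left half-plane (normal stability). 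Once these are in place, \cite{PruSimZac} yields stability, global existence for initial data close to $\mathcal E$, and exponential convergence to some $y_\infty\in\mathcal E$.

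First I would compute the linearization $L$ of \eqref{MS} at the constant equilibrium $y_*$, acting on $X_1=\{u\in W_p^2(\Omega;\EE):\partial_\nu u=0\}$. Since $y_*$ is constant, $L$ splits into a constant-coefficient diffusion part $-A_0(y_*)\Delta$ and a reaction part $M r'(y_*)$; explicitly $Lu = -A_0(y_*)\Delta u + M r'(y_*) u$ with Neumann boundary conditions, understood as an unbounded operator on $X_0 = L_p(\Omega;\EE)$. The second and decisive structural input is the Lyapunov functional $\Psi$. Differentiating the free-energy dissipation identity \eqref{m688} a second time, or equivalently examining the quadratic form associated to $L$, one sees that the linearized dissipation is governed by the two nonpositive forms identified in Proposition \ref{Lem:Konstanten}: the diffusive form $(\partial_{x_\alpha}\mu|J^\alpha)$, controlled by the negative definiteness of $B_S(y_*)$ on $Y_*^{-1/2}\EE$, and the reaction form $(\mu|Mr)$, whose Hessian at $y_*$ is negative semidefinite with kernel tied to $\N(\bnu^{\mathsf T}Y_*^{-1})$ by Lemma \ref{lem:manifold}. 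The plan is to symmetrize $L$ with respect to the weighted inner product induced by the second derivative of $\psi$, namely the scalar product $(Y_*^{-1}M\,\cdot\,|\,\cdot\,)$ on $L_2$, so that $L$ becomes (similar to) a self-adjoint, nonnegative operator; its spectrum is then real and nonnegative, and $0$ is automatically semi-simple because a symmetric operator has no Jordan blocks.

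Next I would identify $\N(L)$ precisely. If $Lu=0$ then testing against $u$ in the symmetrized form forces both the diffusive and reactive quadratic forms to vanish: vanishing of the diffusive form gives $\nabla_x u=0$, so $u$ is a constant vector $u\equiv\xi\in\EE$; vanishing of the reactive form then forces $\xi\in\N(\bnu^{\mathsf T}Y_*^{-1})\cap\EE = T_{y_*}\mathcal E$ by Lemma \ref{lem:manifold}(c). Conversely every such constant tangent vector lies in $\N(L)$. Hence $\N(L)\cong T_{y_*}\mathcal E$ has dimension $N-s-1$, matching the manifold dimension, which verifies the tangency hypothesis. The remaining spectrum is strictly positive: on the orthogonal complement of $\N(L)$ the symmetrized form is strictly coercive, using the Poincar\'e inequality to control the nonconstant (spatially inhomogeneous) part through the diffusion form and the strict negativity of the reaction Hessian transverse to $T_{y_*}\mathcal E$ for the constant part. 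This gives a spectral gap and hence normal stability of $-L$.

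The main obstacle I anticipate is the simultaneous handling of the two degeneracies and the passage from the $L_2$-based symmetric structure, where the spectral analysis is clean, to the $L_p$-setting required by the abstract theorem and the statement's $W_p^{2-2/p}$-norm. Concretely, the quadratic form only directly controls the combination of $\nabla_x u$ and the reaction term, so one must argue carefully that coercivity on $\N(L)^\perp$ genuinely yields a uniform spectral gap rather than merely positivity, and that the eigenvalues and eigenprojections computed in $L_2$ transfer to $L_p$ (the spectrum is independent of $p$ for such elliptic operators, and the eigenfunctions are smooth, so this should go through but needs to be stated). A further technical point is checking semi-simplicity rigorously: it is not enough that $L$ be symmetrizable on a dense domain; one must confirm that the symmetrization is an admissible equivalence of operators on $X_0$ preserving maximal $L_p$-regularity, so that the decomposition $X_0=\N(L)\oplus\R(L)$ holds in the sense required by \cite{PruSimZac}. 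Once these verifications are assembled, the conclusion — stability, global existence near $\mathcal E$, and exponential convergence with asymptotic phase to a single $y_\infty\in\mathcal E$ — follows directly from the generalized principle of linearized stability.
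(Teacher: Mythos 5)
Your proposal is correct and follows essentially the same route as the paper: linearize at the constant equilibrium, test the eigenvalue equation against $Y_*^{-1}M^{-1}u$ (the Hessian of the free energy density) to reduce everything to the positive (semi-)definiteness of $-A(y_*)P(y_*)Y_*$ and of $K=\diag(k_l^-c_*^{\bnu_l^-})$, identify $\N(\mathcal A_*)=\N(\bnu^{\mathsf T}Y_*^{-1})\cap\EE=T_{y_*}\mathcal E$, and invoke the generalized principle of linearized stability of \cite{PruSimZac}. The only cosmetic difference is that the paper settles semi-simplicity and the $L_p$-issue you flag not by an explicit symmetrization, but by the compact-resolvent/Riesz-index decomposition $X_0=\N(\mathcal A_*^k)\oplus\R(\mathcal A_*^k)$ together with a direct computation showing $\N(\mathcal A_*^2)\subseteq\N(\mathcal A_*)$ via the same weighted pairing.
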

\begin{proof} Fix $y_* \in \mathcal E$. To prove the assertions for $y_*$ we intend to apply Theorem \ref{The:Langzeitverhalten1}. Recall from the proof of Theorem \ref{Sat:Haupt1} that \eqref{MS} may be rewritten into the form
\begin{equation}\label{abs-stab}
 \dot{u} + \mathcal A(u)u = F(u), \quad t\in (0,T), \qquad u(0) = u_0,
\end{equation}
where $\mathcal A$ is defined in \eqref{eq:opmathcalA} and $F(u) = M r(u+\frac{1}{N}\textsf{e}).$ The solutions of \eqref{MS} and \eqref{abs-stab} are in one-to-one correspondence via $u = y- \frac{1}{N}\textsf{e}$. It thus suffices to prove the corresponding stability assertions for the equilibrium $u_* = y_*-  \frac{1}{N}\textsf{e}$ of \eqref{abs-stab}. To this end we verify the conditions (i)-(iv) from Theorem \ref{The:Langzeitverhalten1} concerning the linearized operator $u\mapsto\mathcal A_*u = \mathcal A(u_*)u + [\mathcal A'(u_*)u]u_* - F'(u_*)u$ with domain $X_1= \{u\in W_p^2(\Omega;\EE)\,:\,\partial_\nu u = 0\}$. Since $\nabla u_* = 0$, from \eqref{eq:opmathcalA} we see that $\mathcal A(u_*) =- A_0(y_*)\Delta$, and Lemma \ref{Pro:Diffbarkeit} implies $[\mathcal A'(u_*)h]u_*= 0$. Therefore
$$\mathcal A_* = -A_0(y_*)\Delta - Mr'(y_*).$$
In Step 2 of the proof of Lemma \ref{lem:manifold} it was shown that
$$r'(y_*) = \bnu \nabla_y \mathbf r(\rho M^{-1}y_*) = - \bnu K \bnu^{\textsf{T}} Y_*^{-1},$$
where $K = \text{diag}(k_l^-c_*^{\bnu_{l}^-})$. The key observation is the
following identity: for an eigenvalue $\lambda$ of $\mathcal A_*$
with eigenfunction $u \in X_1$ we have
\begin{align}
\lambda \int_\Omega &(u|Y_*^{-1} M^{-1} u) \,dx  =
\int_\Omega (\mathcal A_* u|Y_*^{-1} M^{-1} u)\,dx \notag \\
& = \sum_{\alpha = 1}^n\int_\Omega (A(y_*)P(y_*) M^{-1}
\partial_{x_\alpha}^2 u |Y_*^{-1} M^{-1} u)\,dx + \int_\Omega (M\bnu K\bnu^{\textsf{T}}Y_*^{-1} u|Y_*^{-1} M^{-1}
u)\,dx\notag \\
& = -\sum_{\alpha = 1}^n\int_\Omega (A(y_*)P(y_*) Y_* Y_*^{-1}
M^{-1}
\partial_{x_\alpha} u |Y_*^{-1} M^{-1} \partial_{x_\alpha} u)\,dx
 + \int_\Omega (K\bnu^{\textsf{T}}Y_*^{-1} u|
\bnu^{\textsf{T}}Y_*^{-1} u)\,dx.\label{eq:spec-key}
\end{align}
We now verify the conditions (i)-(iv) from Theorem \ref{The:Langzeitverhalten1}.

(i)+(ii) By the Lemma \ref{lem:manifold} and Proposition \ref{Lem:Konstanten}, the set of equilibria of \eqref{abs-stab} in $\mathcal X$ is given by $\mathcal E - \frac{1}{N}\textsf{e}$ and forms a smooth manifold. The tangent space at $u_*$ is given by $\N(\bnu^{\textsf{T}}Y_*^{-1})\cap \mathds E$. We show that $\N(\mathcal A_*) = \N(\bnu^{\textsf{T}}Y_*^{-1})\cap \mathds E$. Let $u\in \N(\mathcal A_*)$. Since $-A(y_*)P(y_*) Y_*$ is positive semi-definite by Lemma \ref{Lem:4}
 and $K$ is diagonal with positive entries, \eqref{eq:spec-key} with $\lambda =0$ 
yields that
$$\sum_{\alpha = 1}^n\int_\Omega (A(y_*)P(y_*) Y_*
Y_*^{-1} M^{-1}
\partial_{x_\alpha} u |Y_*^{-1} M^{-1} \partial_{x_\alpha} u)\,dx
  = \int_\Omega (K\bnu^{\textsf{T}}Y_*^{-1} u|
\bnu^{\textsf{T}}Y_*^{-1} u)\,dx = 0.$$ Observe that $Y_*^{-1}
M^{-1}
\partial_{x_\alpha} u(x)\notin \N(A(y_*)P(y_*) Y_*)= \text{span}\{\textsf{e}\}$ because
of $\partial_{x_\alpha} u(x)\in \EE$ for all $x\in \Omega$.
Thus the definiteness of $A(y_*)P(y_*) Y_*|_{\mathds
E}$ yields $Y_*^{-1} M^{-1} \partial_{x_\alpha}u = 0$ for all
$\alpha$, hence $u$ is constant. Moreover, the second identity
implies that $\bnu^{\textsf{T}} Y_*^{-1} u = 0$. This shows $\N(\mathcal A_*) \subseteq \N(\bnu^{\textsf{T}}Y_*^{-1})\cap \EE$. The other inclusion is obvious.

(iii) We show that zero is a semi-simple eigenvalue of $\mathcal
A_*$. Since $\mathcal A_*$ has compact resolvent, there is a smallest number $k\in \mathbb N$, the so-called Riesz index, such that $\N(\mathcal A_*^k) = \N(\mathcal A_*^{k+1})$. For the Riesz index $k$ we further have $X_0 = \N(\mathcal A_*^k) \oplus \R(\mathcal A_*^k).$ In fact, this is well-known for compact operators (see \cite[Theorem 3.3]{Kress}) and carries over to $\mathcal A_*$ in a standard way by a resolvent.  We thus have to show that $\N(\mathcal A_*^2) \subseteq \N(\mathcal A_*).$ For $u\in \N(\mathcal A_*^2)$ the identity $\N(\mathcal A_*) =
 \N(\bnu^{\textsf{T}}Y_*^{-1}) \cap \EE$ as shown above implies that $v =
\mathcal A_* u$ is constant and satisfies $\bnu^{\textsf{T}}Y_*^{-1} v = 0$. We therefore have
\begin{align*}
(v|M^{-1} Y_*^{-1} v) = - (M\bnu K\bnu^{\textsf{T}}Y_*^{-1}u|
M^{-1} Y_*^{-1} v) = - ( K\bnu^{\textsf{T}}Y_*^{-1}u|
\bnu^{\textsf{T}} Y_*^{-1} v)= 0,
\end{align*}
which implies $v =0$ and thus $u\in \N(\mathcal A_*)$.

(iv) We finally show that $\sigma(\mathcal A_0)\setminus \{0\}$ is
strictly contained in $\{\text{Re}\,z>0\}$. Because $\mathcal
A_*$ has compact resolvent, its spectrum consists of discrete
eigenvalues, and thus it suffices to show that each eigenvalue
$\lambda \neq 0$ is positive. But  this is a consequence of
\eqref{eq:spec-key} and the positive (semi-) definiteness of
$-A(y_*)P(y_*) Y_*$ and $K$.
\end{proof}

We end the paper with a conditional result on the convergence of
solutions to equilibria.
\begin{Pro} \label{prop:global} Assume \emph{(\textbf{R})} and $y_0\in \mathcal X$. Let $y$ be the solution of \eqref{MS} corresponding to $y_0$. Then the following holds true.
\begin{itemize}
 \item[a)] If $\sup_{t \in (0,t^+(u_0))} \|y(t,\cdot)\|_{W_p^{2-2/p}(\Omega;\mathbb R^N)} <\infty$, then $y$ is a global solution.
 \item[b)] Suppose additionally that, for some $t_0 > 0$, $$\inf_{t > t_0, \,x\in \overline{\Omega}} y(t,x) > 0.$$  Then, as $t\to \infty$,  $y$ converges exponentially with respect to the $W_p^{2-2/p}(\Omega;\mathbb R^N)$-norm to a 
 constant kinetic equilibrium $y_*\in \mathcal E$ of \eqref{MS}. If $r\equiv 0$, then $y_* = \frac{1}{|\Omega|} \int_\Omega y_0\,dx$.
\end{itemize}
\end{Pro}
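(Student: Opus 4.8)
The plan is to derive a) from the global continuation (blow-up) criterion of the appendix, and to obtain b) by first identifying the $\omega$-limit set of the orbit with $\mathcal E$ through the strict Lyapunov property of the total free energy $\Psi$, and then upgrading this to convergence towards a \emph{single} equilibrium by invoking the normal stability established in Theorem \ref{Sat:Haupt2}.

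For a), note that $y_0\in \mathcal X$ has range in $\mathring{\DD}$, so Theorem \ref{positivity} together with the preserved constraint $\sum_k y_k = 1$ gives $y(t,\cdot)\geq 0$ with $\sum_k y_k(t,\cdot)=1$; hence $y(t,\overline\Omega)\subset\DD$, and $\DD$ is a compact subset of $\V$. Therefore $\dist(y(t,\overline\Omega),\partial\V)\geq\dist(\DD,\partial\V)>0$ uniformly in $t$, so the range of the solution never approaches $\partial\V$. Combined with the standing hypothesis that $\|y(t)\|_{W_p^{2-2/p}(\Omega;\mathbb R^N)}$ stays bounded, this keeps $u=y-\ee/N$ in a bounded subset of $V_\mu$ that remains bounded away from $\partial V_\mu$, and the continuation criterion of Theorem \ref{The:lokaleExistenz} then excludes $t^+(y_0)<\infty$.

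For b), the additional hypothesis $\inf_{t>t_0,\,x} y(t,x) > 0$ together with $\sum_k y_k = 1$ confines $y(t,\cdot)$ to a compact set $K\Subset\mathring{\DD}$ for $t>t_0$. On $K$ the coefficients $A_0(y)$, $r(y)$ and their derivatives are uniformly bounded and $\mathcal A(u)$ is uniformly normally elliptic, so the method of time weights \cite{KohPruWil, PruSim} yields a uniform $W_p^{2-2/p}$-bound and, crucially, relative compactness of the orbit $\{y(t):t\geq t_0\}$ in $W_p^{2-2/p}(\Omega;\mathbb R^N)$. In particular a) applies, $t^+(y_0)=\infty$, and the $\omega$-limit set $\omega(y_0)$ is nonempty, compact, connected and invariant under the semiflow. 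Since $\Psi$ is a strict Lyapunov function by Proposition \ref{Lem:Konstanten} and is bounded below on $K$, the map $t\mapsto\Psi(y(t))$ decreases to some value $\Psi_\infty$; thus every $z\in\omega(y_0)$ satisfies $\Psi(z)=\Psi_\infty$, and by invariance $\Psi$ is constant along the solution through $z$, which by the strict Lyapunov property forces that solution to be an equilibrium. By Proposition \ref{Lem:Konstanten}b) the equilibria are exactly the constants in $\mathcal E$, whence $\emptyset\neq\omega(y_0)\subset\mathcal E$.

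To pass from $\omega(y_0)$ to a single limit, fix $z_*\in\omega(y_0)\subset\mathcal E$ and let $\e>0$ be the radius provided by Theorem \ref{Sat:Haupt2} at $z_*$. Relative compactness in $W_p^{2-2/p}(\Omega;\mathbb R^N)$ produces a time $t_1>t_0$ with $\|y(t_1)-z_*\|_{W_p^{2-2/p}(\Omega;\mathbb R^N)}<\e$, and $y(t_1)\in\mathcal X$ (its range lies in $K\subset\mathring{\DD}$ and $\partial_\nu y(t_1)=0$); applying Theorem \ref{Sat:Haupt2} to the solution restarted at $t_1$, which by uniqueness coincides with the time shift of $y$, yields exponential convergence of $y$ to a single $y_\infty\in\mathcal E$ in the $W_p^{2-2/p}$-norm. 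If $r\equiv 0$, integrating (MS) over $\Omega$ and using the no-flux condition $(J_k|\nu)=0$ shows that $\int_\Omega y(t)\,dx$ is conserved; since $y_\infty$ is a spatially constant equilibrium, letting $t\to\infty$ gives $y_\infty = \tfrac{1}{|\Omega|}\int_\Omega y_0\,dx$. The main obstacle is precisely the relative compactness of the orbit in the full trace space $W_p^{2-2/p}(\Omega;\mathbb R^N)$, as this is what allows one to reach the $\e$-neighbourhood of $z_*$ in which Theorem \ref{Sat:Haupt2} operates; it rests on turning the $L_\infty$-confinement to $K$ into higher-order, time-uniform parabolic estimates via the time-weight technique, whereas the LaSalle identification of $\omega(y_0)$ with $\mathcal E$ and the conservation argument for $r\equiv 0$ are routine.
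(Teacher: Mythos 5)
Your argument follows the same route as the paper, whose proof simply invokes the abstract Theorems \ref{The:Langzeitverhalten2} and \ref{Sat:Haupt2}: global existence from the boundedness-plus-distance-from-$\partial V_\mu$ criterion, identification of $\omega(y_0)$ as a subset of $\mathcal E$ via the strict Lyapunov function $\Psi$ from Proposition \ref{Lem:Konstanten}, convergence to a single equilibrium by normal stability once the orbit enters the stability neighbourhood of some $z_*\in\omega(y_0)$, and conservation of the mean for $r\equiv 0$. One step, however, is overclaimed: you assert that the $L_\infty$-confinement of $y(t,\cdot)$ to a compact set $K\Subset\mathring{\DD}$ by itself yields, via the time-weight method, a uniform $W_p^{2-2/p}$-bound, so that ``a) applies''. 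The time-weight technique produces relative compactness of the orbit in $X_\gamma=W_p^{2-2/p}$ from a uniform bound in the \emph{weaker} trace space $X_{\gamma,\mu}=W_p^{2\mu-2/p}$ (with $\mu>\mu_0>1/2$) together with the distance condition; an $L_\infty$-bound is not a bound in $X_{\gamma,\mu}$, and for strongly coupled quasilinear systems one cannot in general bootstrap sup-norm control into time-uniform higher-order parabolic estimates. This does not sink your proof, because the word ``additionally'' in part b) means the $W_p^{2-2/p}$-bound of part a) is among the hypotheses of part b); you should take that bound as given rather than claim to derive it from the positivity condition.
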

\begin{proof} Part a) follows from Theorem \ref{The:Langzeitverhalten2}. For Part b), Proposition  \ref{Lem:Konstanten} shows that $\Phi$ is a strict Lyapunov function on $\mathcal X$, and the assertions are a consequence of the Theorems \ref{The:Langzeitverhalten2}  and \ref{Sat:Haupt2}. In case $r\equiv 0$, conservation of mass for each component yields immediately $y_* = \frac{1}{|\Omega|} \int_\Omega y_0\,dx$.\end{proof}

\begin{appendix}
 \section{Abstract quasilinear parabolic evolution equations}\label{sec:abstract}

In this section we will provide some results from \cite{KohPruWil, Bari, PruSimZac} concerning  abstract quasilinear parabolic evolution equations of the form
\begin{equation}\label{eq:Abstrakt}
\dot u + \mathcal{A}(u)u=F(\lambda,u),\quad  t\in (0,T),\qquad u(0)=u_0,
\end{equation}
where $T\in (0,\infty)$ and $\lambda\in\Lambda$ with $\Lambda\subset\mathbb{R}$ open and nonempty. In the sequel let $\mu\in (1/p,1]$ and define a weighted $L_p$-space by
$$L_{p,\mu}(0,T;X)=\{u\colon[0,T]\to X\,:\,[t\mapsto t^{1-\mu}u(t)]\in L_p(0,T;X)\},$$
where $X$ is a Banach space. Furthermore, let
$$W_{p,\mu}^1(0,T;X)=\{u\in L_{p,\mu}(0,T;X)\cap W_{1,loc}^{1}(0,T;X)\,:\,\dot{u}\in L_{p,\mu}(0,T;X)\}.$$
Observe that for $\mu=1$ we have $L_{p,1}=L_p$ as well as $W_{p,1}^1=W_p^1$.

Consider two Banach spaces $X_1$ and $X_0$ with dense embedding $X_1\hookrightarrow X_0$. We are interested in solutions $u$ of \eqref{eq:Abstrakt} with regularity
$$u\in W_{p,\mu}^1(0,T;X_0)\cap L_{p,\mu}(0,T;X_1)=:\mathbb{E}_{1,\mu}(0,T).$$
By \cite[Proposition 3.1]{PruSim} the embedding
$$\mathbb{E}_{1,\mu}(0,T)\hookrightarrow BUC(0,T;(X_0,X_1)_{\mu-1/p,p})$$
holds true. This yields $u_0\in (X_0,X_1)_{\mu-1/p,p}=:X_{\gamma,\mu}$ as a necessary condition for the initial value $u_0$ from \eqref{eq:Abstrakt}. It is possible to show that this regularity assumption on $u_0$ is also sufficient for solving \eqref{eq:Abstrakt} in the space $\mathbb{E}_{1,\mu}(0,T)$. To formulate the precise statement we introduce some notation. By $\mathcal{B}(X_1,X_0)$ we denote the space of all bounded and linear operators from $X_1$ to $X_0$. Furthermore, we say that an operator $\mathcal{A}_0\colon X_1\to X_0$ has maximal regularity of type $L_p$ if for each $f\in L_p(0,T_0;X_0)=:\mathbb{E}_{0,\mu}(0,T_0)$ there exists a unique solution
$$u\in W_p^1(0,T_0;X_0)\cap L_p(0,T_0;X_1)$$
of the linear problem
$$\dot{u}+\mathcal{A}_0 u=f,\ t\in [0,T_0],\quad u(0)=0,$$
where $T_0>0$ is arbitrary.

Concerning well-posedness of \eqref{eq:Abstrakt} and regularity of the solution of \eqref{eq:Abstrakt} we have the following result. 
\begin{The}\label{The:lokaleExistenz}
Let $p\in (1,\infty)$, $\mu \in (1/p,1]$, $\emptyset\neq V_\mu \subseteq X_{\gamma,\mu}$ an open set, $\mathcal{A}\in C^{1}(V_\mu; \B(X_1,X_0))$ and $F\in C^1(\Lambda\times V_\mu;X_0)$. Assume that $\mathcal{A}(u)$ has maximal regularity of type $L_p$ for each $u\in V_\mu$.
Then the following statements are true.
\begin{itemize}
\item[a)] For each $(u_0,\lambda_0)\in V_\mu\times\Lambda$ there exists $T>0$ such that the problem \eqref{eq:Abstrakt}
has a unique solution $u(\cdot,u_0,\lambda_0)\in\mathbb{E}_{1,\mu}(0,T)\cap BUC(0,T;V_\mu)$.
\item[b)] Each local solution of \eqref{eq:Abstrakt} can be extended to a maximal solution with a maximal interval of existence $J(u_0,\lambda_0)=[0,t^+(u_0,\lambda_0))$ and $u(\cdot,u_0,\lambda_0)\in\mathbb{E}_{1,\mu}(0,T)\cap BUC(0,T;V_\mu)$ for each $T\in (0,t^+(u_0,\lambda_0))$.
\item[c)] For any $T\in (0,t^+(u_0,\lambda_0))$ there exists $\delta>0$ such that $t^+(u_1,\lambda_1)>T$ for all $(u_1,\lambda_1)\in B_{\delta}^{X_{\gamma,\mu}}(u_0)\times B_{\delta}(\lambda_0)$ and the mapping
    $$B_{\delta}^{X_{\gamma,\mu}}(u_0)\times B_{\delta}(\lambda_0)\to C([0,T];X_{\gamma,\mu}),\quad (u_1,\lambda_1)\mapsto u(\cdot,u_1,\lambda_1)$$
    is continuously Fr\'{e}chet differentiable. In particular, the mapping $(u_1,\lambda_1)\mapsto t^+(u_1,\lambda_1)$ is lower semicontinuous.
\item[d)] For each  $T\in (0,t^+(u_0,\lambda))$ we have
$$u(\cdot,u_0,\lambda)\in C^{1}((0,T];X_\gamma) \cap C^{2 - 1/p}((0,T];X_0) \cap C^{1 - 1/p}((0,T];X_1),$$
where $X_\gamma=X_{\gamma,1}=(X_0,X_1)_{1-1/p,p}$.
\end{itemize}
\end{The}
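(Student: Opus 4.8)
The plan is to recast \eqref{eq:Abstrakt} as a fixed-point equation for the deviation from a frozen-coefficient reference solution and to solve it by the contraction mapping principle, the engine being maximal $L_p$-regularity of the linearization together with the time-weight structure. Fix $(u_0,\lambda_0)\in V_\mu\times\Lambda$ and set $A_0:=\mathcal A(u_0)$, which has maximal regularity of type $L_p$ by assumption. Since $u_0\in X_{\gamma,\mu}$, maximal regularity for nonzero initial data (as in \cite{PruSim}) yields a reference solution $u_*\in\mathbb{E}_{1,\mu}(0,T)$ of $\dot u_*+A_0u_*=F(\lambda_0,u_0)$, $u_*(0)=u_0$. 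Writing $u=u_*+w$ turns \eqref{eq:Abstrakt} into $\dot w+A_0w=G(w,\lambda)$, $w(0)=0$, with
\[
G(w,\lambda)=\big[A_0-\mathcal A(u_*+w)\big](u_*+w)+F(\lambda,u_*+w)-F(\lambda_0,u_0).
\]
By maximal regularity the solution operator $L^{-1}$ of $\dot w+A_0w=f$, $w(0)=0$, is an isomorphism from $\mathbb{E}_{0,\mu}(0,T)$ onto the closed subspace ${}_0\mathbb{E}_{1,\mu}(0,T)$ of functions with vanishing trace, with norm bounded uniformly for $T\le T_0$.

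I would then look for $w$ as a fixed point of $\Phi(w):=L^{-1}G(w,\lambda)$ on a small ball $\Sigma_R=\{w\in{}_0\mathbb{E}_{1,\mu}(0,T):\|w\|\le R\}$. The embedding $\mathbb{E}_{1,\mu}(0,T)\hookrightarrow BUC(0,T;X_{\gamma,\mu})$ together with $u_*(0)=u_0$ keeps $u_*+w$ in a prescribed $X_{\gamma,\mu}$-neighbourhood of $u_0$ for $T$ and $R$ small; since $\mathcal A,F\in C^1$ are locally Lipschitz, the factor $A_0-\mathcal A(u_*+w)$ is then small in $\mathcal B(X_1,X_0)$ uniformly in $t$, and the $F$-increment is controlled likewise. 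Combined with the $T$-uniform bound on $L^{-1}$, this makes $\Phi$ a self-map and a strict contraction on $\Sigma_R$ for small $T$, and the Banach fixed-point theorem produces a unique $w$, hence a unique $u=u_*+w\in\mathbb{E}_{1,\mu}(0,T)\cap BUC(0,T;V_\mu)$, proving a). Part b) is the usual concatenation argument: restarting a local solution from later times and invoking uniqueness yields a maximal solution on $J(u_0,\lambda_0)=[0,t^+)$ with the stated regularity on compact subintervals.

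For c) I would read the construction as $\Xi(w;u_0,\lambda):=w-L^{-1}G(w,\lambda)=0$. This map is $C^1$ in all arguments, because $\mathcal A,F\in C^1$ and the associated substitution operators inherit differentiability on these spaces, while $\partial_w\Xi$ at the solution is invertible, being the identity minus a strict contraction. The implicit function theorem then gives continuous Fréchet differentiability of $(u_0,\lambda)\mapsto u$ into $C([0,T];X_{\gamma,\mu})$, and lower semicontinuity of $t^+$ follows from the openness inherent in the fixed-point construction. For the smoothing statements in d) I would use the parameter (time-shift) trick: for small $s>0$ the shifted function $u(\cdot+s)$ solves the same quasilinear problem with initial value $u(s)\in X_1\hookrightarrow X_\gamma$, so applying a) and the differentiable dependence c) with $\mu=1$ to the $s$-family gives $u\in C^1((0,T];X_\gamma)$; the Hölder regularities in $X_0$ and $X_1$ then follow by interpolation from the $\mathbb{E}_{1,1}$-regularity available on $[\delta,T]$.

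The main obstacle is the \emph{$T$-uniform} contraction estimate in the weighted spaces. One must simultaneously bound $\|L^{-1}\|$ independently of $T\le T_0$ on the zero-trace subspace and extract smallness of $A_0-\mathcal A(u_*+w)$ and of the $F$-increment as $T,R\to0$; here the time weight $t^{1-\mu}$ is exactly what decouples the sup-norm control (through the $BUC$-embedding) from the $L_p$-in-time estimates, allowing the contraction constant to be driven below $1$. Setting up part d) so that the $s$-dependence is genuinely $C^1$ into the correct space is the other delicate point.
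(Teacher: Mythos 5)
Your overall strategy is sound and is essentially the one behind the results the paper cites: parts a) and b) are exactly the contraction-mapping proof of \cite[Theorem 2.1]{KohPruWil} (reference solution with frozen coefficients, zero-trace subspace ${}_0\mathbb{E}_{1,\mu}$, $T$-uniform bound on the solution operator), and the paper itself does not reproduce that argument but simply invokes it. For part c) the paper also uses the implicit function theorem, applied to $H(\lambda_1,u_1,u)=(\dot u+\mathcal A(u)u-F(\lambda_1,u),\,u|_{t=0}-u_1)$ rather than to your fixed-point map $\Xi$; the two formulations are equivalent in substance.

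There are, however, two concrete gaps. First, in c) you justify invertibility of $\partial_w\Xi$ by saying it is ``the identity minus a strict contraction.'' That is only true on the small existence interval produced by the fixed-point argument, whereas c) is asserted for \emph{every} $T\in(0,t^+)$. For arbitrary $T$ you must show that the nonautonomous linearized problem $\dot v+\mathcal A(u_0^*(t))v=B(t)v+f$, $v(0)=v_0$, with $B(t)v=D_uF(\lambda_0,u_0^*(t))v-[\mathcal A'(u_0^*(t))v]u_0^*(t)$, is uniquely solvable in $\mathbb{E}_{1,\mu}(0,T)$ on the whole interval. The paper does this in two steps: a Neumann-series (equivalently, your contraction) argument in the weighted space on a short interval $[0,\tau]$, followed by continuation on $[\tau,T]$ in the \emph{unweighted} setting using that $v(\tau)\in X_\gamma$ and $B(\cdot)\in L_p(\tau,T;\mathcal B(X_\gamma,X_0))$, via \cite[Corollary 3.4]{Bari}. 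Some such continuation step (or an iteration of your local argument along a partition of $[0,T]$) is indispensable and missing from your write-up. Second, your proof of d) by the time-\emph{shift} trick is circular: to differentiate $s\mapsto u(\cdot+s)$ using c) you need $s\mapsto u(s)$ to be $C^1$ into $X_{\gamma,\mu}$, which is (part of) what you are trying to prove. The correct device is the time-\emph{dilation} (Angenent) trick: $u_\lambda(t)=u(\lambda t)$ solves $\dot u_\lambda+\lambda\mathcal A(u_\lambda)u_\lambda=\lambda F(u_\lambda)$ with the \emph{same} initial value, so the parameter enters the vector field and not the data; differentiating at $\lambda=1$ yields $t\dot u(t)\in C((0,T];X_\gamma)$ and hence the stated regularity. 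This is the content of \cite[Theorem 5.1]{Bari}, which the paper cites for d) after noting the instantaneous smoothing $\mathbb{E}_{1,\mu}(0,T)\hookrightarrow\mathbb{E}_{1,1}(\delta,T)$ and that $(\mathcal A,F)$ remains $C^1$ on the open set $V_\mu\cap X_\gamma\subset X_\gamma$.
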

\begin{proof}
a) Let $(u_0,\lambda_0)\in V_\mu\times\Lambda$ be fixed. Then it follows from the assumption on $F$ that $F_{\lambda_0}:=F(\lambda_0,\cdot)$ is locally Lipschitz continuous in $V_\mu$. Therefore the statement follows from \cite[Theorem 2.1]{KohPruWil}.

b) This is a direct consequence of \cite[Corollary 2.2]{KohPruWil}.

c) For the proof of this assertion we invoke the implicit function theorem. Let $T\in (0,t^+(u_0,\lambda_0))$ be given and let $\iota:\mathbb{E}_{1,\mu}(0,T)\to BUC(0,T;X_{\gamma,\mu})$ denote the inclusion map
$$\mathbb{E}_{1,\mu}(0,T)\hookrightarrow BUC(0,T;X_{\gamma,\mu}).$$
Since $V_\mu$ is open in $X_{\gamma,\mu}$ is follows that $W_\mu:=\iota^{-1}(BUC(0,T;V_\mu))$ is open in $\mathbb{E}_{1,\mu}(0,T)$. Define a mapping $H:\Lambda\times V_\mu\times W_\mu\to \mathbb{E}_{0,\mu}(0,T)\times X_{\gamma,\mu}$ by
$$H(\lambda_1,u_1,u):=(\dot{u}+\mathcal{A}(u)u-F(\lambda_1,u),u|_{t=0}-u_1).$$
Note that we use the same notation for $(\mathcal{A},F)$ and the corresponding substitution operators induced by $(\mathcal{A},F)$ in $W_\mu$.

From the assumptions on $(\mathcal{A},F)$ it follows readily that $H\in C^1(\Lambda\times V_\mu\times W_\mu;\mathbb{E}_{0,\mu}(0,T)\times X_{\gamma,\mu})$ and the derivative of $H$ with respect to $u$ is given by
$$D_uH(\lambda_1,u_1,u)v=(\dot{v}+\mathcal{A}(u)v+[\mathcal{A}'(u)v]u-D_uF(\lambda_1,u)v,v|_{t=0}),$$
where $v\in \mathbb{E}_{1,\mu}(0,T)$. Observe that $H(\lambda_0,u_0,u_0^*)=0$, where $u_0^*:=u(\cdot,u_0,\lambda_0)$ is the solution of \eqref{eq:Abstrakt} on $[0,T]$ with $\lambda=\lambda_0$. If we want to solve the equation $H(\lambda_1,u_1,u)=0$ for $u$ in a neighborhood of $(\lambda_0,u_0,u_0^*)$, we have to show that the mapping
$$D_uH(\lambda_0,u_0,u_0^*):\mathbb{E}_{1,\mu}(0,T)\to \mathbb{E}_{0,\mu}(0,T)\times X_{\gamma,\mu}$$
is an isomorphism. For $v\in\mathbb{E}_{1,\mu}(0,T)$ we define
$$\mathcal{A}(t)v:=\mathcal{A}(u_0^*(t))v\quad\text{and}\quad B(t)v:=D_uF(\lambda_0,u_0^*(t))v-[\mathcal{A}'(u_0^*(t))v]u_0^*(t).$$
It follows that $\mathcal{A}(\cdot)\in C([0,T];\mathcal{B}(X_1,X_0))$, $B(\cdot)\in L_{p,\mu}(0,T;\mathcal{B}(X_{\gamma,\mu},X_0))$ and for each $t\in [0,T]$ the operator $\mathcal{A}(t)$ has maximal regularity of type $L_p$. We have to show that for each $v_0\in X_{\gamma,\mu}$ and $f\in L_{p,\mu}(0,T;X_0)$ the nonautonomous problem
\begin{equation}\label{eq:nonautonom}
\dot{v}+\mathcal{A}(t)v=B(t)v+f,\ t\in[0,T],\quad v(0)=v_0,
\end{equation}
has a unique solution $v\in \mathbb{E}_{1,\mu}(0,T)$. Note that without loss of generality we may assume that $v_0=0$ by the expense that $f$ has to be replaced by some modified function $\tilde{f}\in L_{p,\mu}(0,T;X_0)$. The strategy for solving \eqref{eq:nonautonom} with $v_0=0$ is as follows. In a first step one applies a Neumann series argument in the space 
$$_0\mathbb{E}_{1,\mu}(0,T):=\{v\in\mathbb{E}_{1,\mu}(0,T):v(0)=0\}$$ 
to obtain a unique solution $v_1\in\!_0\mathbb{E}_{1,\mu}(0,T)$ of \eqref{eq:nonautonom} on some interval $[0,\tau]\subset[0,T]$. Note that $v_1(\tau)\in X_\gamma$, since 
$$\mathbb{E}_{1,\mu}(0,\tau)\hookrightarrow \mathbb{E}_{1,1}(\delta,\tau)\hookrightarrow C([\delta,\tau];X_\gamma).$$
In a second step one solves \eqref{eq:nonautonom} on $[\tau,T]$ with $v_0$ replaced by $v_1(\tau)\in X_\gamma$. Since $B(\cdot)\in L_p(\tau,T;\mathcal{B}(X_\gamma,X_0))$ we may apply \cite[Corollary 3.4]{Bari} to conclude the existence of a unique $v_2\in \mathbb{E}_{1,1}(\tau,T)$ solving \eqref{eq:nonautonom} on $[\tau,T]$ with $v_2(\tau)=v_1(\tau)$.

The implicit function theorem yields $\delta>0$ and a mapping $\Phi\in C^1(B_\delta(\lambda_0)\times B_\delta^{X_{\gamma,\mu}}(u_0);W_\mu)$ such that $H(\lambda_1,u_1,\Phi(\lambda_1,u_1))=0$ for all $(\lambda_1,u_1)\in B_\delta(\lambda_0)\times B_\delta^{X_{\gamma,\mu}}(u_0)$. Therefore  $u_1^*:=\Phi(\lambda_1,u_1)\in W_\mu$ is the unique solution of \eqref{eq:Abstrakt} on $[0,T]$ with $\lambda=\lambda_1$ and $u_0=u_1$. This proves the third assertion, since $\mathbb{E}_{1,\mu}(0,T)\hookrightarrow C([0,T];X_{\gamma,\mu})$.

d) Note that for each $\delta\in (0,T)$ the embedding
$$\mathbb{E}_{1,\mu}(0,T)\hookrightarrow \mathbb{E}_{1,1}(\delta,T)$$
is valid, which shows instant smoothing of the solution. In particular, for each $t_*\in (0,t^+(u_0))$ one has $u(t_*,u_0)\in V_\mu\cap X_\gamma$, since
$$\mathbb{E}_{1,1}(\delta,T)\hookrightarrow BUC(\delta,T;X_\gamma).$$
Moreover the regularity condition on $(A,F_\lambda)$ with $F_\lambda=F(\lambda,\cdot)$ implies that
$$(\mathcal{A},F_\lambda)\in C^{1}(V_\mu\cap X_\gamma; \B(X_1,X_0) \times X_0),$$
and $V_\mu\cap X_\gamma$ is open in $X_\gamma$, since $X_\gamma\hookrightarrow X_{\gamma,\mu}$. We are now in a position to apply \cite[Theorem 5.1]{Bari} to prove the last assertion. Actually in \cite{Bari} the author uses the assumption $(\mathcal{A},F_\lambda)\in C^1(X_\gamma,\mathcal{B}(X_1,X_0)\times X_0)$, i.e.\ $V_\mu\cap X_\gamma=X_{\gamma}$. However, an inspection of the proof shows that the statement remains true if one replaces $X_\gamma$ by the open set $X_\gamma\cap V_\mu\subset X_\gamma$.
\end{proof}
From now on we fix $\lambda\in\Lambda$ and simply write $F$ instead of $F_\lambda=F(\lambda,\cdot)$. We are interested in the qualitative behaviour of a solution $u(\cdot,u_0)$ of \eqref{eq:Abstrakt} if the initial value $u_0$ is close to an equilibrium. We call $u_*$ an equilibrium of \eqref{eq:Abstrakt} if $u_*\in V_\mu\cap X_1$ and $\mathcal{A}(u_*)u_*=f(u_*)$. The set of all equilibria is denoted by $\mathcal{E}$.

Let us recall that the full linearization  of \eqref{eq:Abstrakt} at an equilibrium $u_*\in V_\mu\cap X_1$ is given by
$$\mathcal{A}_0=\mathcal{A}(u_*)+\frac{d}{dw}[\mathcal{A}(w)u_*]|_{w=u_*}-F'(u_*).$$
Then we can state the following result.
\begin{The}\label{The:Langzeitverhalten1}
Let $p\in (1,\infty)$ and $V_\mu\subseteq X_{\gamma,\mu}$ an open subset. Further let $u_* \in V_\mu\cap X_1$ be an equilibrium of \eqref{eq:Abstrakt}.
Assume that $\mathcal{A}(u_*)$ has maximal regularity of type $L_p$ and $(\mathcal{A},F)\in C^1(V_\mu;\B(X_1,X_0)\times X_0)$. Let $\mathcal{A}_0$ denote the full linearization of \eqref{eq:Abstrakt} at $u_*$. Suppose that the equilibrium is \emph{normally stable}, that is
\begin{enumerate}
\item[(i)]
in a neighborhood of $u_*$ the set of equilibria $\mathcal E \subseteq V_\mu\cap X_1$ is a $C^1$-manifold of dimension $m\in \mathbb{N}_0$;
\item[(ii)] the tangent space on $\mathcal E$ at $u_*$ is given by $\N(\mathcal{A}_0)$;
\item[(iii)] $0$ is a semisimple eigenvalue of $\mathcal{A}_0$, i.e.\ $\N(\mathcal{A}_0) \oplus \R(\mathcal{A}_0)=X_0$;
\item[(iv)] the spectrum $\sigma(\mathcal{A}_0)$ satisfies $\sigma(\mathcal{A}_0)\setminus\{0\}\subset \mathbb \{\operatorname{Re}\,z>0\}$.
\end{enumerate}
Then $u_*$ is stable in $X_\gamma$ and there exists $\delta >0$ such that for each $u_0\in B_\delta^{X_\gamma}(u_*)$ the unique solution $u(\cdot,u_0)$ of $\eqref{eq:Abstrakt}$ exists for all $t\ge 0$ and it converges at an exponential rate to some equilibrium $u_\infty \in \mathcal E$ as $t\to \infty$.
\end{The}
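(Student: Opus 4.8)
The plan is to follow the generalized principle of linearized stability of \cite{PruSimZac}, combining the spectral splitting induced by the semisimple eigenvalue $0$ with exponentially weighted maximal $L_p$-regularity on the half-line and a fixed point argument. First I would set $v = u - u_*$ and rewrite \eqref{eq:Abstrakt}, after subtracting the equilibrium relation $\mathcal A(u_*)u_* = F(u_*)$, in the form $\dot v + \mathcal A_0 v = R(v)$, $v(0) = v_0 := u_0 - u_*$, with
$$R(v) = \mathcal A_0 v - \big[\mathcal A(u_*+v)(u_*+v) - \mathcal A(u_*)u_*\big] + \big[F(u_*+v) - F(u_*)\big].$$
Using $(\mathcal A,F)\in C^1$ and the very definition of $\mathcal A_0$, the linear terms cancel, so $R(0) = 0$ and $R'(0) = 0$; thus $R\colon X_1 \to X_0$ is of higher order near $0$ and obeys a Lipschitz estimate whose constant tends to $0$ as the radius shrinks.

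Next I would exploit the spectral structure. Since $\mathcal A_0$ is a lower-order perturbation of $\mathcal A(u_*)$, it again has maximal $L_p$-regularity, and by (iv) the point $0$ is isolated in $\sigma(\mathcal A_0)$. Hence the Dunford projection $P_0$ onto the spectral set $\{0\}$ is well defined; by (iii) it satisfies $\R(P_0) = \N(\mathcal A_0)$, which is finite-dimensional of dimension $m$ by (i)+(ii), and $\mathcal A_0 P_0 = P_0 \mathcal A_0 = 0$. Writing $P_1 = I - P_0$ I obtain the topological direct sums $X_j = X_j^0 \oplus X_j^1$ ($j = 0,1,\gamma$) with $X_0^0 = \N(\mathcal A_0)$, and on the complement the part $\mathcal A_0|_{X^1}$ is invertible with $\sigma(\mathcal A_0|_{X^1}) \subset \{\operatorname{Re} z \geq \omega\}$ for some $\omega > 0$; it therefore generates an exponentially stable analytic semigroup enjoying maximal regularity in spaces carrying a weight $e^{\omega' t}$, $0 < \omega' < \omega$. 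Applying $P_0$ and $P_1$ decouples the equation into
$$\dot z = P_0 R(z+w), \quad z(0) = P_0 v_0, \qquad \dot w + \mathcal A_0 w = P_1 R(z+w), \quad w(0) = P_1 v_0,$$
where $z = P_0 v$ and $w = P_1 v$.

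By (i) and (ii) the manifold $\mathcal E$ is, near $u_*$, a graph over $X_0^0$: there is a $C^1$ map $\phi\colon U \subset X_0^0 \to X_1^1$ with $\phi(0) = 0$ and $\phi'(0) = 0$ (the latter encoding that the tangent space at $u_*$ is $\N(\mathcal A_0)$) such that the equilibria are exactly $u_* + \xi + \phi(\xi)$. I would then measure the distance to $\mathcal E$ by the transverse deviation $\tilde w := w - \phi(z)$ and run a coupled fixed point argument for $(z,\tilde w)$ on $[0,\infty)$: in the exponentially weighted maximal regularity space for $\tilde w$ and a suitable space for $z$, the quadratic smallness of $R$ together with the exponential stability of $\mathcal A_0|_{X^1}$ yields a contraction on a small ball. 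This produces, for every sufficiently small $v_0$, a unique global solution with $\tilde w$ decaying like $e^{-\omega' t}$ and with a uniform norm bound controlled by $\|v_0\|$; the latter gives Lyapunov stability in $X_\gamma$.

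The main obstacle I expect is the convergence of the tangential (slow) component $z$, which is precisely the feature distinguishing this statement from ordinary linearized stability at an isolated equilibrium. The key observation is that $P_0 R$ \emph{vanishes along the manifold}: every equilibrium $u_* + \xi + \phi(\xi)$ satisfies $\mathcal A_0(\xi+\phi(\xi)) = R(\xi+\phi(\xi))$, and applying $P_0$ (with $P_0\mathcal A_0 = 0$) gives $P_0 R(\xi + \phi(\xi)) = 0$ for all $\xi \in U$. Writing $v = z + \phi(z) + \tilde w$ one therefore has $\dot z = P_0\big[R(v) - R(z+\phi(z))\big]$, and the Lipschitz estimate for $R$ bounds $\|\dot z\|$ by $C\,\|\tilde w\|_{X_1}$. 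Since $\tilde w$ decays exponentially, $\dot z \in L_1(0,\infty; X_0^0)$, forcing $z(t) \to z_\infty$ and hence $u(t) \to u_\infty := u_* + z_\infty + \phi(z_\infty) \in \mathcal E$ at an exponential rate. Assembling the weighted estimates so that both components are controlled simultaneously, and verifying that the limit genuinely lies on $\mathcal E$, is the technically delicate part of the argument.
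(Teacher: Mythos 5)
Your proposal is correct and is essentially the paper's own approach: the paper's proof merely observes that $V_\mu\cap X_\gamma$ is open in $X_\gamma$ with $(\mathcal{A},F)\in C^1(V_\mu\cap X_\gamma;\B(X_1,X_0)\times X_0)$ and then invokes \cite[Theorem 2.1]{PruSimZac}, whose internal mechanism --- spectral projection at the semisimple eigenvalue $0$, the graph parametrization $\xi\mapsto u_*+\xi+\phi(\xi)$ of $\mathcal E$ over $\N(\mathcal{A}_0)$ with $\phi(0)=0$, $\phi'(0)=0$, the identity $P_0R(\xi+\phi(\xi))=0$ along the manifold, and a contraction in exponentially weighted maximal regularity spaces for the transverse variable $\tilde w=w-\phi(z)$ --- is exactly what you reconstruct. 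Two small points to tidy: isolation of $0$ in $\sigma(\mathcal{A}_0)$ follows from (iii) (the splitting $X_0=\N(\mathcal{A}_0)\oplus\R(\mathcal{A}_0)$ makes $\mathcal{A}_0$ invertible on its range), not from (iv) alone, and your fixed-point argument should be set up on the $X_\gamma$-open set $V_\mu\cap X_\gamma$, which is precisely the reduction step the paper's short proof carries out before citing the reference.
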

\begin{proof}
If $V_\mu\subset X_{\gamma,\mu}$ is open, then $V_\mu\cap X_\gamma$ is open in $X_\gamma$. Furthermore, the regularity assumption on $(A,F)$ implies that
$$(\mathcal{A},F)\in C^1(V_\mu\cap X_\gamma;\mathcal{B}(X_1,X_0)\times X_0).$$
Now the assertion follows directly from \cite[Theorem 2.1]{PruSimZac}.
\end{proof}
It is possible to extend this local result on the qualitative behavior to a global one if one assumes the existence of a strict Lyapunov function for \eqref{eq:Abstrakt}. This assertion is a part of the following result.
\begin{The} \label{The:Langzeitverhalten2}
Let $p\in (1,\infty)$, $\mu \in (1/p,1)$, $V_\mu \subseteq X_{\gamma, \mu}$ an open set. Assume that $\mathcal{A}(u)$ has maximal regularity of type $L_p$ for each $u\in V_\mu$. Further let $(\mathcal{A},F) \in C^1(V_\mu; \B(X_1,X_0) \times X_0)$ and suppose that the embedding  $X_\gamma \hookrightarrow X_{\gamma,\mu}$ is compact.
Suppose that the unique solution $u(\cdot,u_0)$ of \eqref{eq:Abstrakt} with initial value $u_0\in V_\mu$ satisfies
$$u\in BC([\tau,t^+(u_0)); V_{\mu}\cap X_\gamma)$$
for some $\tau\in (0,t^+(u_0))$ as well as
$$\dist(u(t,u_0), \partial V_\mu)\ge\eta>0$$
for all $t\in [0,t^+(u_0))$. Then the following statements are valid.
\begin{enumerate}
\item The solution $u(t,u_0)$ exists for all $t\ge 0$, i.e.\ $t^+(u_0)=\infty$ and the set $\{u(t,u_0) \,:\, t\geq \tau\}$ is relatively compact in $X_\gamma$. Furthermore the omega limit set
$$\omega(u_0) = \{ v\in V_\mu\cap X_\gamma\,:\,\exists\ t_n \to \infty\ \text{s.t.}\ \lim_{n\to\infty}u(t_n,u_0)=v\ \text{in}\ X_\gamma\}$$
is nonempty, compact, connected and positively invariant.
\item If there exists a strict Lyapunov function $\Phi\in C(V_\mu\cap X_\gamma)$ for \eqref{eq:Abstrakt}, then $\omega(u_0)\subset \mathcal{E}$. Moreover, if in addition there exists an equilibrium $u_*\in\omega(u_0)$ which is normally stable, then
$$\lim_{t\to\infty} u(t,u_0)=u_*$$
in $X_\gamma$ at an exponential rate.
\end{enumerate}
\end{The}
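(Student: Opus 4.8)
The plan is to establish part (1) by a continuation argument based on Theorem \ref{The:lokaleExistenz} combined with the compactness of $X_\gamma \hookrightarrow X_{\gamma,\mu}$, and then to deduce part (2) from LaSalle's invariance principle together with the local convergence result Theorem \ref{The:Langzeitverhalten1}. First I would show $t^+(u_0)=\infty$. Since the orbit is bounded in $X_\gamma$ and the embedding $X_\gamma \hookrightarrow X_{\gamma,\mu}$ is compact, the $X_{\gamma,\mu}$-closure $K$ of $\{u(t):t\geq\tau\}$ is compact; because $\dist(u(t),\partial V_\mu)\geq\eta$ and $V_\mu$ is open, $K\subset V_\mu$. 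By Theorem \ref{The:lokaleExistenz}(c) the minimal existence time is bounded below uniformly on the compact set $K$, so if $t^+(u_0)$ were finite one could restart the solution from $u(t_*)$ with $t_*$ close to $t^+(u_0)$ and continue beyond $t^+(u_0)$, contradicting maximality.

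Next I would prove relative compactness in $X_\gamma$. The bounded orbit is precompact in $X_{\gamma,\mu}$ by the compact embedding. To upgrade this to $X_\gamma$ I would use parabolic smoothing: by Theorem \ref{The:lokaleExistenz}(d) and the time-weight embeddings $\mathbb{E}_{1,\mu}(0,T)\hookrightarrow \mathbb{E}_{1,1}(\delta,T)\hookrightarrow BUC(\delta,T;X_\gamma)$, the time-$\delta$ solution map $S(\delta)$ carries $X_{\gamma,\mu}$-data into $X_\gamma$ and, by continuous dependence (Theorem \ref{The:lokaleExistenz}(c)) refined to positive times, does so continuously. Applying $S(\delta)$ to the precompact set $\{u(s):s\geq\tau\}\subset X_{\gamma,\mu}$ shows that its image $\{u(t):t\geq\tau+\delta\}$ is precompact in $X_\gamma$, whence the whole orbit is relatively compact in $X_\gamma$. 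Consequently $\omega(u_0)$ is nonempty and compact, and its connectedness and positive invariance follow in the standard way from the continuity of the semiflow along the connected half-line.

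For part (2), if $\Phi$ is a strict Lyapunov function, then $t\mapsto\Phi(u(t))$ is nonincreasing and, by compactness of the orbit and continuity of $\Phi$, bounded below, hence convergent to some $\Phi_\infty$; by continuity $\Phi\equiv\Phi_\infty$ on $\omega(u_0)$. Positive invariance of $\omega(u_0)$ together with the strictness of $\Phi$ forces every point of $\omega(u_0)$ to be an equilibrium, so $\omega(u_0)\subset\mathcal E$. Finally, if $u_*\in\omega(u_0)$ is normally stable, let $\delta>0$ be the radius from Theorem \ref{The:Langzeitverhalten1} and choose $t_n\to\infty$ with $u(t_n)\to u_*$ in $X_\gamma$. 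For large $n$ we have $u(t_n)\in B_\delta^{X_\gamma}(u_*)$, so by uniqueness and Theorem \ref{The:Langzeitverhalten1} applied with initial value $u(t_n)$ the solution converges exponentially in $X_\gamma$ to some $u_\infty\in\mathcal E$; then $\omega(u_0)=\{u_\infty\}$, and since $u_*\in\omega(u_0)$ we conclude $u_\infty=u_*$.

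The step I expect to be the main obstacle is upgrading relative compactness from $X_{\gamma,\mu}$ to $X_\gamma$: this requires promoting the continuous dependence of Theorem \ref{The:lokaleExistenz}(c), stated in $C([0,T];X_{\gamma,\mu})$, to continuity of the time-$\delta$ map into $X_\gamma$, by carefully combining the maximal-regularity estimates with the time-weight smoothing embeddings. Verifying that the continuation criterion genuinely applies, namely that the $X_\gamma$-bound and the distance $\eta$ from $\partial V_\mu$ together preclude blow-up, is the secondary technical point.
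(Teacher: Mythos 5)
Your argument is correct and takes essentially the same route as the paper, whose proof of this theorem is a one-line citation of \cite[Theorems 3.1 and 4.3]{KohPruWil}: your continuation argument, the time-weight smoothing step for compactness in $X_\gamma$, and the LaSalle/normal-stability conclusion are precisely the content of those cited results (the introduction even names ``the method of time weights'' as the compactness mechanism). The technical point you flag --- upgrading continuous dependence to continuity of the time-$\delta$ map into $X_\gamma$ --- is indeed available from the proof of Theorem \ref{The:lokaleExistenz}(c), since the data-to-solution map is $C^1$ into $\mathbb{E}_{1,\mu}(0,T)$ and $\mathbb{E}_{1,\mu}(0,T)\hookrightarrow BUC([\delta,T];X_\gamma)$.
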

\begin{proof}
The assertions follow from \cite[Theorem 3.1 \& Theorem 4.3]{KohPruWil}.
\end{proof}
\end{appendix}

\end{document}